\theoremstyle{plain}
\newtheorem{theorem}{Theorem}[section]
\newtheorem{proposition}[theorem]{Proposition}
\newtheorem{lemma}[theorem]{Lemma}
\newtheorem{corollary}[theorem]{Corollary}
\theoremstyle{definition}
\theoremstyle{remark}
\numberwithin{equation}{section}
\numberwithin{theorem}{section}
\newcommand{\mc}[1]{{\mathcal #1}}
\newcommand{\bs}[1]{{\boldsymbol #1}}
\newcommand{\bb}[1]{{\mathbb #1}}
\newcommand{\rmd}{\mathrm{d}}
\newcommand{\eps}{\varepsilon}
\newcommand{\id}{{1 \mskip -5mu {\rm I}}}
\newcommand{\supp}{\mathop{\rm supp}\nolimits}
\title[Vanishing viscosity limit for  concentrated vortex rings]{Vanishing viscosity limit for  concentrated vortex rings}
\author[P.\ Butt\`a]{Paolo Butt\`a}
\address{Dipartimento di Matematica\\
Sapienza Universit\`a di Roma\\
P.le Aldo Moro 5, 00185 Roma\\
Italy}
\email{butta@mat.uniroma1.it}
\author[G.\ Cavallaro]{Guido Cavallaro}
\address{Dipartimento di Matematica\\
Sapienza Universit\`a di Roma\\
P.le Aldo Moro 5, 00185 Roma\\
Italy}
\email{cavallar@mat.uniroma1.it}
\author[C.\ Marchioro]{Carlo Marchioro}
\address{Dipartimento di Matematica\\
Sapienza Universit\`a di Roma\\
P.le Aldo Moro 5, 00185 Roma\\
Italy\\
and\\
International Research Center M\&MOCS\\ 
Universit\`a di L'Aquila\\
Palazzo Caetani\\
04012 Cisterna di Latina (LT)\\
Italy}
\email{marchior@mat.uniroma1.it}
\keywords{Incompressible viscous flow, vortex rings.}
\subjclass{
76D17, 
37N10. 
}
\date{}
\begin{document}

\begin{abstract}
We study the time evolution of a viscous incompressible fluid with axial symmetry without swirl, when the initial vorticity is very concentrated in $N$ disjoint rings. We show that in a suitable joint limit, in which both the thickness of the rings and the viscosity tend to zero, the vorticity remains concentrated in $N$ disjointed rings, each one of them performing a simple translation along the symmetry axis with constant speed.
\end{abstract}

\maketitle

\section{Introduction}
\label{sec:1}

Axisymmetric flows without swirl are a special class of incompressible fluid flows in the whole space $\bb R^3$, having the advantage of being simple enough to be analyzed with mathematical rigor and, nevertheless, containing non-trivial and interesting examples. In particular, the existence of a global solution both for the Euler equation and the Navier–Stokes one has been established many years ago in Refs.~\cite{Lad,UY}, see also Refs.~\cite{Fe-Sv,Gal13,Gal12} for more recent results. 

The axisymmetric solutions without swirl are called sometimes \textit{vortex rings}, because in the ideal (i.e., inviscid) case there exist particular solutions whose shape remains constant in time (the so-called steady vortex rings) and translate along the symmetry axis with constant speed (the propagation velocity). The knowledge of such solutions is very old, but a first rigorous proof of their existence and properties goes back to Refs.~\cite{Fr70,FrB74, AmS89},  by means of variational methods. Other information and references on axially symmetric solution without swirl can be found in Ref.~\cite{ShL92}.

Besides these exceptional solutions, a more physically practical issue concerns the case of generic initial data such that the vorticity is supported in an annulus. This problem was first considered in Ref.~\cite{BCM00}, where in the case of Euler equation it is proved that if the initial vorticity is sharply concentrated then the motion is similar to that of a steady vortex ring. More precisely, if the vorticity is supported in an annulus of thickness $\eps$ and fixed radius, and the vorticity mass vanishes as $|\log\eps|^{-1}$, then in the limit $\eps\to 0$ the vorticity remains concentrated in a thin annulus which performs a rectilinear motion along the symmetry axis. We remark that the vorticity mass must vanish appropriately to have a nontrivial limit motion, as suggested by the translation velocity of the steady vortex ring with unitary vorticity mass, which is known to diverge in the limit of sharp concentration. This result has been extended to the case of Navier-Stokes equation in Ref.~\cite{BruMar}, in a simultaneous limit of vanishing viscosity and sharp concentration.

In this paper, we consider the more general case of an incompressible viscous fluid with axial symmetry without swirl in which the initial vorticity is concentrated in $N>1$ disjoint rings of thickness $\eps$, fixed radii and vorticity mass of the order of $|\log\eps|^{-1}$. As in the case of Ref.~\cite{BruMar}, we study the system in the joint limit in which the viscosity $\nu \to 0$ and simultaneously $\eps\to 0$, in such a way that $\nu \le \eps^2 |\log\eps|^\gamma$ (with $\gamma>0$). We show that the vorticity remains concentrated around $N$ rings, which perform uniform rectilinear motions parallel to the symmetry axis. 

This analysis constitutes a generalization of Refs.~\cite{BuM2,ButCavMar}, where the case without viscosity was studied. With respect to the case of a vortex alone considered in Ref.~\cite{BCM00}, the techniques of Refs.~\cite{BuM2,ButCavMar} allow not only to show that large part of the mass of each vortex ring remains concentrated in an annulus, but also that their supports do not overlap. This strong localization property plays a key role in showing that the interaction among the rings becomes negligible in the limit $\eps\to 0$ (for an application of these ideas to the lake equations see Ref.~\cite{HLM}). The extension to the present situation is therefore nontrivial, because the diffusive effect due to the viscosity produces an instantaneous spreading of the supports of the vorticities rings (initially with compact supports) in the whole space, thus overlapping with each other and making the mutual interaction more relevant. These technical issues will be carefully discussed later on.

There are other examples in the literature of special classes of concentrated initial data whose time evolution is closely related to the dynamics of particular systems of particles. We mention the case of an incompressible fluid with planar symmetry, in which the initial vorticity is concentrated in $N$ small disks, each one containing an amount of vorticity of order one. This system has been satisfactorily understood, obtaining the convergence to the celebrated \textit{point vortex model}, both in the Euler case, see  Refs.~\cite{BuM1,MaP93,MaP94,Mar98}, and in the vanishing viscosity limit, see Refs.~\cite{Mar90, Mar98,Gal11,CS}. 

Clearly, planar symmetry is a good approximation of axisymmetry without swirl when the fluid is observed far away from the symmetry axis. With this respect, we mention that the case of vortex rings with very large radii have been already considered in the literature. More precisely, in Ref.~\cite{Mar99} it is proved that in an inviscid system of $N$ vortex rings with thickness $\eps$, vorticity mass of order one, and radii of order $\eps^{-\alpha}$, $\alpha>0$, the centers of vorticity converge as $\eps\to 0$ to the solution of the point vortex system (the case of vanishing viscosity was treated later in Ref.~\cite{Mar07}). This result has been extended in Ref.~\cite{CavMar21} up to the case of radii of the order $|\log\eps|^\alpha$, with $\alpha>2$. The more interesting regime is for radii of order $ |\log\eps|$, where the limiting motion of the vorticity centers is conjectured to be governed by a dynamical system which is a combination of the point vortex dynamics plus simple translations with constant speed. For this problem, only the simple case of a vortex ring alone can be analyzed both for the Euler case (see Ref.~\cite{MarNeg}) and for Navier-Stokes (not present in the literature, but it can be proved along the same lines of Ref.~\cite{BruMar}). More challenging is the case of $N>1$ vortex rings, in which no results are available both for Euler and Navier-Stokes equations, and new ideas are needed since the basic tools used up to now fail in this critical regime.

\section{Notation and main result}
\label{sec:2}

The evolution of an incompressible viscous fluid of unitary density filling the whole space $\bb R^3$ is governed by the Navier-Stokes equations,
\begin{equation}
\label{eule}
\partial_t  \bs u + (\bs u\cdot \nabla) \bs u  =  -\nabla p +\nu\Delta \bs u\,, \qquad \nabla \cdot \bs u = 0 \,,
\end{equation}
where $\bs u = \bs u(\bs\xi,t)$ and $p=p(\bs\xi,t)$ are the velocity and pressure respectively, $\nu\ge 0$ is the viscosity, $\bs\xi = (\xi_1,\xi_2,\xi_3)$ denotes a point in $\bb R^3$, and $t\in \bb R_+$ is the time. More precisely, the evolution is determined by the Cauchy problem associated to \eqref{eule} with assigned boundary conditions. In what follows, we always assume that $\bs u$ decays at infinity (this happens under the initial conditions specified hereafter, see for instance Ref.~\cite{Fe-Sv}). Introducing the vorticity $\bs\omega$, defined by
\begin{equation}
\label{vort}
\bs\omega = \nabla \wedge \bs u \,,
\end{equation}
this implies that the velocity $\bs u$ can be reconstructed from $\bs\omega$ as
\begin{equation}
\label{u-vort}
\bs u(\bs\xi,t) = - \frac{1}{4\pi} \int\! \rmd \bs\eta \, \frac{(\bs\xi-\bs\eta) \wedge \bs\omega(\bs\eta,t)}{|\bs\xi-\bs\eta|^3} \,.
\end{equation}
On the other hand, by Eqs.~\eqref{eule} and \eqref{vort}, the vorticity evolves according to the equation
\begin{equation}
\label{vorteq}
\partial_t \bs\omega + (\bs u\cdot \nabla) \bs\omega  = (\bs \omega\cdot \nabla) \bs u +\nu\Delta \bs \omega \,,
\end{equation}
so that Eqs.~\eqref{u-vort} and \eqref{vorteq} give a formulation of the Navier-Stokes equations (with velocity decaying at infinity) in terms of the vorticity $\bs\omega$.

Denoting by $(z,r,\theta)$ the cylindrical coordinates, we recall that the vector field $\bs F$ of cylindrical components $(F_z, F_r, F_\theta)$ is called axisymmetric without swirl if $F_\theta=0$ and $F_z$ and $F_r$ are independent of $\theta$. 

The axisymmetry is preserved by the evolution equation Eq.~\eqref{eule}. Moreover, when restricted to axisymmetric velocity fields $\bs u(\bs\xi,t) = (u_z(z,r,t), u_r(z,r,t), 0)$, Eqs.~\eqref{vort} and \eqref{vorteq} expressed in cylindrical components reduce to
\begin{equation}
\label{omega}
\bs\omega = (0,0,\omega_\theta) = (0,0,\partial_z u_r - \partial_r u_z)
\end{equation}
and, denoting henceforth $\omega_\theta$ by $\omega$, 
\begin{equation}
\label{omeq}
\partial_t \omega + (u_z\partial_z + u_r\partial_r) \omega - \frac{u_r\omega}r = 
\nu \left[ \partial_z^2 \omega +\frac{1}{r}\partial_r\left( r\partial_r\omega\right) -\frac{\omega}{r^2}\right].
\end{equation}
We also notice that the solenoidal condition $\nabla \cdot \bs u = 0$ reads
\[
\partial_z(ru_z) + \partial_r(ru_r) = 0\,.
\]
Furthermore, in view of Eq.~\eqref{u-vort}, $u_z = u_z(z,r,t)$ and $u_r=u_r(z,r,t)$ are given by
\begin{align}
\label{uz}
u_z & = - \frac{1}{2\pi} \int\! \rmd z' \!\int_0^\infty\! r' \rmd r' \! \int_0^\pi\!\rmd \theta \, \frac{\omega(z',r',t) (r\cos\theta - r')}{[(z-z')^2 + (r-r')^2 + 2rr'(1-\cos\theta)]^{3/2}} \,,
\\ \label{ur}
u_r & = \frac{1}{2\pi} \int\! \rmd z' \!\int_0^\infty\! r' \rmd r' \! \int_0^\pi\!\rmd \theta \, \frac{\omega(z',r',t) (z - z')\,\cos \theta}{[(z-z')^2 + (r-r')^2 + 2rr'(1-\cos\theta)]^{3/2}} \,.
\end{align}
Hence, the axisymmetric solutions to the Euler equations are the solutions to Eqs.~\eqref{omeq}, \eqref{uz}, and \eqref{ur}.

We further observe that Eq.~\eqref{omeq} expressed in term of the quantity $\omega/r$ reads
\begin{equation}
\label{cons-omr}
\left[\partial_t + u_z\partial_z + \left(u_r - \frac{3\nu}{r} \right)\partial_r  \right] \left(\frac{\omega}{r}\right)  = \nu \left(\partial_z^2+\partial_r^2\right) \left(\frac{\omega}{r} \right),
\end{equation}
which means that $\omega/r$ evolves as a diffusion with drag. Finally, we notice that, by a formal integration by parts, the following weak formulation of Eq.~\eqref{omeq} holds true, 
\begin{equation}
\label{weq}
\frac{\rmd}{\rmd t} \omega_t[f] = \omega_t\left[u_z\partial_z f + u_r\partial_r f + \partial_t f\right]
+\nu \omega_t \left[\partial_z^2 f +\partial_r^2 f -\frac{1}{r}\partial_r f\right],
\end{equation}
where
\[
\omega_t[f] := \int\! \rmd z \!\int_0^\infty\! \rmd r \, \omega(z,r,t) f(z,r,t) \,,
\]
and $f = f(z,r,t)$ is any  smooth test function, such that the boundary terms in the integration by parts vanish (at $r=0$ and $r=+\infty$).

Denoting by $\Pi:=\left\{(z,r):r>0\right\}$ the open half-plane (note that a point in the half-plane $\Pi$ corresponds to a circumference in the three dimensional space $\bb R^3$), we take initial data for which the vorticity has compact support contained in $N$ disks in $\Pi$, i.e.,
\begin{equation}
\label{in}
\omega(z,r,0) = \sum_{i=1}^N  \omega_{i,\eps}^0(z,r) \,,
\end{equation}
being $\eps\in (0,1)$ a small parameter, where $\omega_{i,\eps}^0(z,r)$, $i=1,\ldots, N$, are functions with definite sign whose support is contained in $\Sigma(\zeta^i|\eps)$, which is the open disk of center $\zeta^i$ and radius $\eps$,
\begin{equation}
\label{initial}
\Lambda_{i,\eps}(0) := \supp\, \omega_{i,\eps}(\cdot,0) \subset \Sigma(\zeta^i|\eps)\,,
\end{equation}
with 
\[
\overline{\Sigma(\zeta^i|\eps)} \subset\Pi\quad \forall\, i\,, \qquad  \Sigma(\zeta^i|\eps)\cap \Sigma(\zeta^j|\eps)=\emptyset\quad \forall\, i \ne j\,,
\]
for fixed $\zeta^i = (z_i,r_i)\in \Pi$. We assume also that
\begin{equation}
\label{2D}
\min_i r_i > 2D\quad \forall\, i\,, \qquad |r_i-r_j| \ge 2D \quad \forall\, i \ne j\,,
\end{equation}
where $D$ is a positive fixed constant. This means that the annuli have different radii, which is an essential hypothesis for our analysis. Clearly, at positive time this separation is no more true, since by Eq.~\eqref{omeq} each initial vortex ring is spread in the whole space. Nevertheless, the decomposition Eq.~\eqref{in} extends also at positive times,
\begin{equation}
\label{in-t}
\omega_\eps(z,r,t) = \sum_{i=1}^N  \omega_{i,\eps}(z,r,t)\,,
\end{equation}
provided $\omega_{i,\eps}(z,r,t)$, $i=1,\ldots,N$, satisfy
\begin{equation}
\label{omeq-i}
\begin{split}
& \partial_t \omega_{i,\eps} + (u_z\partial_z + u_r\partial_r) \omega_{i,\eps} - \frac{u_r\omega_{i,\eps}}r = 
\nu \left[ \partial_z^2 \omega_{i,\eps} +\frac{1}{r}\partial_r\left( r\partial_r\omega_{i,\eps}\right) -\frac{\omega_{i,\eps}}{r^2}\right], \\ & \omega_\eps(z,r,0) = \omega_{i,\eps}^0(z,r)\,,
\end{split}
\end{equation}
where $(u_z,u_r)$ is the velocity field associated to the whole vorticity $\omega_\eps(z,r,t)$ by means of Eqs.~\eqref{uz} and \eqref{ur}.  

In order to have non trivial (neither vanishing nor diverging) limiting velocities of the vortex rings as $\eps\to 0$, the initial data have to be chosen appropriately.

As in the Euler case (absence of viscosity) analyzed in the quoted papers, the correct choice can be deduced by considering the simplest case of a vortex ring alone, of intensity $N_\eps = \int\! \rmd z \int _0^\infty\! \rmd r \, \omega_\eps(z,r,0)$ and supported in a small region of diameter $\eps$. It is well known that it moves along the $z$-direction with an approximately constant speed proportional to $N_\eps |\log \eps|$, see Ref.~\cite {Fr70}. With this in mind, we assume that there are $N$ real parameters $a_1,\ldots, a_N$, called \textit{vortex intensities}, such that
\begin{equation}
\label{ai}
|\log\eps| \int\!\rmd z \!\int_0^\infty\!\rmd r\, \omega_{i,\eps}(z,r,0) = a_i \quad \forall\,i=1,\ldots,N \,.
\end{equation}
Finally, to avoid too large vorticity concentrations, we further assume there is a constant $M>0$ such that 
\begin{equation}
\label{Mgamma}
|\omega_{i,\eps}(z,r,0)| \le \frac{M}{\eps^2|\log\eps|} \quad \forall\, (z,r)\in \Pi \quad \forall\, i=1,\ldots,N\,.
\end{equation}
Now, we can state the main result of the paper.

\begin{theorem}
\label{thm:1}
Assume the initial datum $\omega(z, r,0)$ verifies Eqs.~\eqref{in}, \eqref{initial}, \eqref{2D}, \eqref{ai}, and \eqref{Mgamma}, and define
\begin{equation}
\label{free_mi}
\zeta^i(t) :=  \zeta^i + \frac{a_i}{4\pi r_i} \begin{pmatrix} 1 \\ 0 \end{pmatrix} t \,, \quad i=1,\ldots,N\,.
\end{equation}
Then, for any $T>0$ the following holds true. For any $\eps$ small enough and $\nu\le \eps^2|\log\eps|^\gamma$, with $\gamma\in (0,1)$,  there are $\zeta^{i,\eps}(t)\in \Pi$, $t\in [0,T]$, $ i=1,\ldots,N$, and $R_\eps>0$ such that
\begin{equation}
\label{conc_thm1.1}
\lim_{\eps \to 0}|\log\eps| \int_{\Sigma(\zeta^{i,\eps}(t)|R_\eps)}\!\rmd z\,\rmd r\, \omega_{i,\eps}(z,r,t) = a_i\quad \forall\, i=1,\ldots,N, \quad \forall\, t\in [0,T]\,,
\end{equation}
with
\[
\lim_{\eps\to 0} R_\eps = 0, \qquad \lim_{\eps\to 0} \zeta^{i,\eps}(t) = \zeta^i(t) \quad \forall\, t\in [0,T]\,,
\]
where $\omega(z,r,t)$ is the time evolution of $\omega(z,r,0)$ via the Navier-Stokes equations.
\end{theorem}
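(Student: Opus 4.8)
The plan is to adapt the strategy of \cite{BuM2,ButCavMar} for the inviscid $N$-ring problem, adding the diffusive terms in the spirit of \cite{BruMar}. It is convenient to work with $\zeta:=\omega/r$, which by \eqref{cons-omr} solves a genuine planar advection--diffusion equation whose radial drag $-3\nu/r$ is bounded on the region $\{r\ge D\}$ where (as will be maintained) the vorticity essentially sits. The maximum principle for this equation gives at once that each $\omega_{i,\eps}(\cdot,t)$ keeps the sign of $\omega_{i,\eps}^0$ and that $\|\omega_{i,\eps}(\cdot,t)/r\|_\infty$ is non-increasing, hence $\|\omega_{i,\eps}(\cdot,t)\|_\infty\le C\,\eps^{-2}|\log\eps|^{-1}$ on $[0,T]$ by \eqref{Mgamma}; taking $f\equiv1$ in (the single-ring analogue of) \eqref{weq} gives conservation of the masses, $m_i:=\int\!\rmd z\,\rmd r\,\omega_{i,\eps}(z,r,t)=a_i/|\log\eps|$.

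Write $x=(z,r)$ and set $B_i(t)=(Z_i(t),R_i(t)):=m_i^{-1}\!\int\!\rmd z\,\rmd r\,x\,\omega_{i,\eps}(x,t)$. Choosing $f=z$ and $f=r$ in \eqref{weq},
\[
\dot Z_i=\frac1{m_i}\!\int\!\rmd z\,\rmd r\,u_z\,\omega_{i,\eps}\,,\qquad
\dot R_i=\frac1{m_i}\!\int\!\rmd z\,\rmd r\,u_r\,\omega_{i,\eps}-\frac{\nu}{m_i}\!\int\!\rmd z\,\rmd r\,\frac{\omega_{i,\eps}}{r}\,.
\]
Split $u=u^{(i)}+\sum_{j\ne i}u^{(j)}$ into the fields generated through \eqref{uz}--\eqref{ur} by the single rings. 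Once the concentration of $\omega_{i,\eps}$ on a scale $\to0$ around $B_i$ is available, the logarithmically singular part of the axisymmetric Biot--Savart kernel gives the leading value $a_i/(4\pi R_i)$ for the self part of $\dot Z_i$, with $o(1)$ remainder; this is where $\nu\le\eps^2|\log\eps|^\gamma$ enters, through the effective core size $\max(\eps,\sqrt{\nu t})$ whose logarithm is still $\sim|\log\eps|$, so that \eqref{ai} produces precisely the coefficient in \eqref{free_mi}. The self part of $\dot R_i$ is $o(1)$ because the radial self-velocity carries no logarithmic enhancement (the leftover after the two-dimensional antisymmetry $(x-y)^\perp\cdot(x-y)=0$ is $O(m_i)$), the viscous correction is $O(\nu)$, and the cross terms are $o(1)$ since the rings stay at mutual radial distance $\ge D$ and remain concentrated. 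Hence, conditionally on the concentration, $\dot R_i\to0$ and $\dot Z_i\to a_i/(4\pi r_i)$ uniformly on $[0,T]$, and since $B_i(0)\to\zeta^i$ this forces $R_i(t)\to r_i$ and $B_i(t)\to\zeta^i(t)$ uniformly on $[0,T]$.

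The heart of the proof is a continuity (bootstrap) argument on $[0,T]$, run simultaneously for $i=1,\dots,N$, with hypotheses (i) $|R_i(t)-r_i|\le D$ (the rings stay in disjoint radial bands, away from the axis) and (ii) a concentration bound on $I_i(t):=\int\!\rmd z\,\rmd r\,|x-B_i(t)|^2\,\omega_{i,\eps}(x,t)$. Differentiating $I_i$ and using $\int(x-B_i)\omega_{i,\eps}=0$ to annihilate the $\partial_tf$ term,
\[
\dot I_i=2\!\int\!\rmd z\,\rmd r\,(u-\dot B_i)\cdot(x-B_i)\,\omega_{i,\eps}+\nu\!\int\!\rmd z\,\rmd r\,\Big(4-\tfrac{2(r-R_i)}{r}\Big)\omega_{i,\eps}\,,
\]
the viscous term being $\le C\nu m_i$. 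In the advective term the fields $u^{(j)}$, $j\ne i$, are smooth near ring $i$ with Lipschitz constant $O(m_j/D^2)=O(1/|\log\eps|)$, hence contribute $\le C|\log\eps|^{-1}I_i$; the self field contributes, after symmetrising the double integral in the two variables of \eqref{uz}--\eqref{ur}, a term that would vanish identically in the purely two-dimensional case (since $(x-y)^\perp\cdot(x-y)=0$) and which here, using the $L^\infty$ bound and the bootstrapped concentration, is bounded by $C|\log\eps|^{-1}I_i$ plus infinitesimal terms. Gronwall then yields $I_i(t)=o(m_i)$ uniformly on $[0,T]$, so by Chebyshev there is $R_\eps\to0$ with the mass of $\omega_{i,\eps}(\cdot,t)$ outside $\Sigma(B_i(t)|R_\eps)$ equal to $o(m_i)=o(a_i/|\log\eps|)$. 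One must still check that this leaked mass cannot reach the other rings: its diffusive transport across distances $\sim D$ is Gaussian-small, $\lesssim m_i\exp(-cD^2/(\nu T))$ with $D^2/(\nu T)\to\infty$ very fast, and its advective transport stays confined because all rings move essentially parallel to the $z$-axis with bounded velocity while the radial separation \eqref{2D} rules out collisions; this closes the circle and legitimates the cross- and self-interaction estimates used above. (The restriction $\gamma\in(0,1)$ serves to keep the viscous source terms, of size $\sim\nu\|\omega_{i,\eps}\|_\infty\sim|\log\eps|^{\gamma-1}$, infinitesimal while still permitting $\sqrt{\nu t}$ of logarithmic size $|\log\eps|$.)

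Closing the bootstrap is then routine: for $\eps$ small the estimates just obtained are strictly stronger than the assumed (i)--(ii), so the set of $t\in[0,T]$ on which they hold is nonempty, open and closed, hence equal to $[0,T]$. Taking $\zeta^{i,\eps}(t):=B_i(t)$ and the $R_\eps$ above, the concentration bound together with $m_i=a_i/|\log\eps|$ gives $|\log\eps|\int_{\Sigma(\zeta^{i,\eps}(t)|R_\eps)}\omega_{i,\eps}=a_i-o(1)$, i.e.\ \eqref{conc_thm1.1}, while $R_\eps\to0$ and $\zeta^{i,\eps}(t)=B_i(t)\to\zeta^i(t)$ have been established. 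I expect item (ii) to be the genuine obstacle: unlike in the inviscid papers the supports are no longer compact, so all the far-field smallness has to be recovered quantitatively from heat-kernel decay and then reinjected into the self- and cross-interaction estimates for $\dot I_i$ and $\dot B_i$ --- in particular handling the region where the core of one ring meets the small but nonzero tail of another.
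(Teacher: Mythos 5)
Your overall architecture (reduce to one ring in an external field, bootstrap on disjoint radial bands, track the center of vorticity) matches the paper's, but the two mechanisms you rely on for the hard steps are not the ones that work, and at least one of them cannot work as stated.

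First, the concentration. You propose to get \eqref{conc_thm1.1} by a Gronwall estimate on the full moment of inertia $I_i(t)=\int |x-B_i|^2\omega_{i,\eps}$, claiming $I_i=o(m_i)$ and then Chebyshev. The obstruction is the axial part of the axisymmetric kernel: writing $H=K+L+\mc R$ as in Eq.~\eqref{decom_u}, the component $L_1(x,y)=\frac{1}{4\pi x_2}\log\frac{1+|x-y|}{|x-y|}$ is not antisymmetric and is logarithmically large on the core scale, so the self-interaction term in $\dot I_i$ does not cancel and is not $O(|\log\eps|^{-1}I_i)$. Indeed the paper's own estimate of the $x_1$-moment (Proposition \ref{prop:1}) only yields $J_\eps(t)\le C/|\log\eps|$, i.e.\ $O(m)$ and not $o(m)$, which by Chebyshev gives no concentration at all in the axial direction. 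The paper obtains the actual concentration (Lemma \ref{lem:4}) from a completely different source: the kinetic energy functional $E(t)$, the bounds on $M_0,M_2$, and the abstract concentration lemma of Ref.~\cite{BruMar}; this is also precisely where the hypothesis $\nu\le\eps^2|\log\eps|^\gamma$ enters (to keep the dissipated energy $\le C|\log\eps|^{\gamma-2}$, see Eq.~\eqref{dotnu}), not through a core-size heuristic $\max(\eps,\sqrt{\nu t})$. Your proposal never invokes the energy, so the central estimate is missing. Relatedly, the masses are \emph{not} conserved: $f\equiv 1$ is not an admissible test function in \eqref{weq} because of the boundary term at $r=0$, and Eq.~\eqref{dotom<} shows $\int\omega_\eps$ strictly decreases; near-conservation \eqref{omega-q-con} is a \emph{consequence} of the concentration lemma, not an input, so normalizing $B_i$ by $m_i=a_i/|\log\eps|$ is circular.

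Second, the tails. Your control of the mass of ring $i$ that reaches the bands of the other rings rests on a Gaussian heat-kernel bound $\lesssim m_i\exp(-cD^2/(\nu T))$ plus the assertion that the advective transport is "bounded". It is not: $\|u\|_{L^\infty}\sim(\eps|\log\eps|)^{-1}$ (Eq.~\eqref{uinf<}), so over time $T$ the advection can a priori move mass over distances diverging as $\eps\to0$, and a pure diffusion estimate does not apply to the advection--diffusion equation. The paper replaces this with the iterative Lemma \ref{lem:3} (Appendix \ref{sec:A}), which exploits the antisymmetry of $K_2$, the cut-off axial moment $I^*_\eps\le C|\log\eps|^{-2}$, and an $n=\lfloor|\log\eps|\rfloor$-fold iteration to show that the mass outside a strip of width $|\log\eps|^{-k}$ is smaller than \emph{any} power of $\eps$. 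This superpolynomial smallness is then essential to neutralize the singular part $\widehat F^\eps$ of the cross field (of size $(\eps|\log\eps|)^{-1}$) in the products $\widehat F^\eps\omega_{i,\eps}$; a bound that is merely $o(m_i)$ would not suffice there. These two gaps --- no energy argument for concentration, and no quantitative mechanism for the radial localization of the tails --- are exactly the parts the paper identifies as the new difficulties of the viscous $N$-ring case.
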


We remark that the quite strong assumption $\nu\le \eps^2|\log\eps|^\gamma$ is needed to control the energy dissipated by viscosity, see Appendix \ref{sec:B}. This seems an  unavoidable condition to obtain the concentration result Eq.~\eqref{conc_thm1.1}, which requires a very small time variation of the energy. 

As for the Euler case studied in Ref.~\cite{ButCavMar}, because of Eq.~\eqref{omeq-i}, $\omega_{i,\eps}(z,r,t)$ can be viewed as the evolution of a single vortex ring driven by the sum of the velocity generated by $\omega_{i,\eps}$ itself plus an external time-depending field (the sum of the velocities generated by $\omega_{j,\eps}$ for $j\ne i$). Therefore, the proof of Theorem \ref{thm:1} will be obtained as a corollary (based on a bootstrap argument) of the analogous result for a modified system, which describes the motion of a single vortex in an external field. The analysis of this system is the content of the next two sections, where the parts which can be treated as in Ref.~\cite{ButCavMar} are only sketched and postponed to the Appendices.

\section{A single vortex in an external field}
\label{sec:3}

It is convenient to rename the variables by setting
\begin{equation}
\label{nv}
x = (x_1,x_2) := (z,r)\,
\end{equation}
and extend the vorticity to a function on the whole plane by taking $\omega_\eps(x,t)=0$ for $x_2\le 0$. Accordingly, the velocity field $(u_z,u_r)$ is denoted by $u = (u_1, u_2)$ and Eqs.~\eqref{uz} and \eqref{ur} take the form
\begin{equation}
\label{u=}
u(x,t) = \int\!\rmd y\, H(x,y)\, \omega_\eps(y,t)\,,
\end{equation}
with kernel $H(x,y) = (H_1(x,y),H_2(x,y))$ given by
\begin{align}
\label{H1}
H_1(x,y) & = \frac{1}{2\pi} \int_0^\pi\!\rmd \theta \, \frac{y_2(y_2 - x_2\cos\theta)}{\big[|x-y|^2 + 2x_2y_2(1-\cos\theta)\big]^{3/2}} \,,
\\ \label{H2}
H_2(x,y) & = \frac{1}{2\pi} \int_0^\pi\!\rmd \theta \, \frac{y_2 (x_1-y_1) \cos\theta}{\big[|x-y|^2 + 2x_2y_2(1-\cos\theta)\big]^{3/2}} \,.
\end{align}

We consider the time evolution of a single vortex in an external time-dependent field $F^\eps(x, t) = (F^\eps_1(x,t), F^\eps_2(x,t))$. The initial vorticity $\omega_\eps(x,0)$ is assumed of non-negative sign (the case of non-positive sign can be treated in the same manner) and satisfying conditions analogous to Eqs.~\eqref{initial}, \eqref{ai}, and \eqref{Mgamma}. Therefore, there are $M>0$, $a>0$, and $\zeta^0 = (z_0,r_0)$, with $r_0>0$, such that
\begin{equation}
\label{MgammaF}
0 \le \omega_\eps(x,0) \le \frac{M}{\eps^2|\log\eps|} \quad \forall\, x\in\bb R^2\,, \qquad |\log\eps|\int\!\rmd y\, \omega_\eps(y,0) =a\,,
\end{equation}
and 
\begin{equation}
\label{initialF}
\Lambda_\eps(0) := \supp\, \omega_\eps(\cdot,0) \subset \Sigma(\zeta^0|\eps)\,.
\end{equation}
The equation of motion is obtained replacing $u$ by $u+ F^\eps$ in  Eq.~\eqref{omeq}, i.e.,
\begin{equation}
\label{N-S_modified}
\partial_t \omega_\eps + \big[(u+ F^\eps)\cdot\nabla\big] \omega_\eps -\frac{u_2\omega_\eps}{x_2} = \nu \left[ \partial_{x_1}^2 \omega_\eps + \frac{1}{x_2}\partial_{x_2} (x_2\partial_{x_2} \omega_\eps) -\frac{\omega_\eps}{x_2^2} \right],
\end{equation}
where $\nabla=(\partial_{x_1}, \partial_{x_2})$. In the sequel, we shall refer to Eqs.~\eqref{N-S_modified} and \eqref{u=} as the ``reduced system''.

Concerning $F^\eps$, we suppose that it is the sum of three terms,
\begin{equation}
\label{ext_field}
F^\eps(x,t)= \widehat F^\eps(x,t) + \widetilde F^\eps(x,t) + \bar F^\eps(x,t) \,,
\end{equation}
each one being a smooth time-dependent field vanishing at infinity and satisfying the conditions 
\begin{align}
\label{divfree}
& \partial_{x_1}(x_2 \widehat F^\eps_1) +\partial_{x_2} (x_2 \widehat F^\eps_2) =0\,, \quad  \partial_{x_1}(x_2 \widetilde F^\eps_1) +\partial_{x_2} (x_2 \widetilde F^\eps_2) =0\,, \nonumber \\ & \partial_{x_1}(x_2 \bar F^\eps_1) +\partial_{x_2} (x_2 \bar F^\eps_2) =0\,,
\end{align}
which express the divergence free condition for the three-dimensional vector fields whose components in cylindrical coordinates are given by $(\widehat F^\eps_1, \widehat F^\eps_2, 0)$, $(\widetilde F^\eps_1, \widetilde F^\eps_2, 0)$, and $(\bar F^\eps_1, \bar F^\eps_2, 0)$, respectively. We further assume that

\smallskip
\noindent
(i) there is $\widehat r >0$ such that
\begin{equation}
\label{sing_Fc}
\supp\, \widehat F^\eps(\cdot,t) \subset \{x\in \bb R^2\colon  |x_2 - r_0| > \widehat r \} \quad \forall\, t\ge 0\,,
\end{equation}
and there is $\widehat C>0$ such that, any $(x,t)\in \bb R^2\times [0,+\infty)$,
\begin{equation}
\label{sing_F}
|\widehat F^\eps(x,t)| \le \frac{\widehat C}{\eps |\log\eps|}\,;
\end{equation}
without loss of generality, for later convenience, we also suppose
\begin{equation}
\label{rh<r_0/4}
\widehat r < \frac{r_0}4\,;
\end{equation}

\smallskip
\noindent
(ii) there is $\widetilde C>0$ such that, for any $(x,y,t)\in \bb R^2\times \bb R^2 \times [0,+\infty)$,
\begin{equation}
\label{reg_F}
|\widetilde F^\eps(x,t)| \le \frac{\widetilde C}{|\log\eps|}, \quad
|\widetilde F^\eps(x,t)-\widetilde F^\eps(y,t)| \le \frac{\widetilde C}{|\log\eps|} |x-y|\,;
\end{equation}

\smallskip
\noindent
(iii) there are $\bar C>0$ and $\beta>1$ such that, for any $(x,t)\in \bb R^2\times [0,+\infty)$,
\begin{equation}
\label{picc_F}
|\bar F^\eps(x,t)|\le \bar C \eps^\beta \quad \forall\, (x,t)\in \bb R^2\times [0,+\infty)\,.
\end{equation}

\smallskip
The above assumptions on $F^\eps$ are dictated by the need to apply the results of this section to the original problem of $N$ vortexes, in which $F^\eps$ simulates the action on each vortex by the other ones. More precisely, the reduced system is constructed to model the motion of a single vortex ring as long as the vorticity mass of each ring remains concentrated (as in the claim of Theorem \ref{thm:1}). Clearly, in principle, this can be true only for a very short initial time interval. But from the analysis of the reduced system we deduce stronger estimates on the localization property of the vortex ring, from which the proof of Theorem \ref{thm:1} can be achieved by means of a bootstrap argument. 

To be concrete, let us suppose that the reduced system describes the evolution of the $i$-th vortex ring. Therefore, by Eqs.~\eqref{omeq-i} and \eqref{u=},
\begin{equation}
\label{Fconj}
F^\eps(x,t) = F^{\eps,i}(x,t) := \sum_{j\ne i} \int\!\rmd y\, H(x,y)\, \omega_{j,\eps}(y,t)\,,
\end{equation}
and we further assume that the localization property stated in Theorem \ref{thm:1} holds true. Roughly speaking, the three terms in the right-hand side of Eq.~\eqref{ext_field} can then be interpreted in the following way. The term $\widetilde F^\eps(x,t)$ represents the sum of the velocity fields due to the vorticities $\omega_{j,\eps}(y,t)$ contained in a thin strip $|y_2-r_j| \ll 1$, when the argument $x$ of $\widetilde F^\eps(x,t)$ is outside this strip. The term $\bar F^\eps(x,t)$ represents the velocity field due to the vorticities $\omega_{j,\eps}(y,t)$ outside these thin strips; in this region, the integral of each $\omega_{j,\eps}(y,t)$ is very small and consequently the velocity very weak (this will be proved rigorously later on). Finally, $\widehat F^\eps(x,t)$ arises since $\omega_{i,\eps}(x,t)$ has not compact support, hence its support can invade regions around $r_j$, where the other rings can produce a singular velocity field (in the limit $\eps\to 0$), see Eq.~\eqref{sing_F}. Anyway, the vorticity mass of $\omega_{i,\eps}(x,t)$ in such regions is very small, hence this fact weaken the singularity of $\widehat F^\eps(x,t)$ in the integrals containing the product $\widehat F^\eps(x,t) \omega_{i,\eps}(x,t)$.

It is worthwhile to notice that in the case of the Euler equations ($\nu=0$) treated in Ref.~\cite{ButCavMar}, only the smoother term $\widetilde F^\eps(x,t)$ is present  because the supports of all the vortex rings remain confined in disjoint strips (say, the support of $\omega_{i,\eps}(x,t)$ remains inside $|x_2-r_j|\ll 1$ for any $j=1,\ldots,N$).

We can now state the localization result on the reduced system.

\begin{theorem}
\label{thm:2}
Assume the initial datum $\omega_\eps(x,0)$ verifies Eqs.~\eqref{MgammaF} and \eqref{initialF}, and define
\begin{equation}
\label{free_m}
\zeta(t) :=  \zeta^0 + \frac{a}{4\pi r_0} \begin{pmatrix} 1 \\ 0 \end{pmatrix} t \,.
\end{equation}
Then, for any $T>0$ the following holds true. For any $\eps$ small enough and $\nu\le \eps^2|\log\eps|^\gamma$, with $\gamma\in (0,1)$,  there are $\zeta_\eps(t)\in \Pi$  and $\varrho_\eps>0$ such that, for any $t\in [0, T]$,
\begin{equation}
\label{massa}
\lim_{\eps \to 0}|\log\eps| \int_{\Sigma(\zeta_\eps(t)|\varrho_\eps)}\!\rmd z\, \omega_\eps(x,t) = a\,,
\end{equation}
with
\[
\lim_{\eps\to 0} \varrho_\eps = 0, \qquad \lim_{\eps\to 0} \zeta_\eps(t) = \zeta(t) \,,
\]
where $\omega_\eps(x,t)$ is the time evolution of $\omega_\eps(x,0)$ via Eq.~\eqref{N-S_modified}.
\end{theorem}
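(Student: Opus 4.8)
The plan is to track a suitably defined ``center'' and ``spread'' of the vortex $\omega_\eps(\cdot,t)$, showing that the center moves (to leading order in $|\log\eps|^{-1}$) with the steady-ring velocity $\tfrac{a}{4\pi r_0}$ in the $x_1$-direction, while the bulk of the vorticity mass stays within a vanishing radius. Following the strategy of Ref.~\cite{ButCavMar} adapted to the viscous case of Ref.~\cite{BruMar}, I would introduce the center of vorticity $B_\eps(t) := \frac{|\log\eps|}{a}\int\! x\,\omega_\eps(x,t)\,\rmd x$ (or a weighted variant with $\omega/x_2$, which is the natural conserved-type quantity by Eq.~\eqref{cons-omr}), and a moment of inertia $I_\eps(t) := \int |x - B_\eps(t)|^2 \omega_\eps(x,t)\,\rmd x$. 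Using the weak formulation Eq.~\eqref{weq} adapted to the reduced system Eq.~\eqref{N-S_modified} with test functions $f = x_1$, $f = x_2$, and $f = |x - B_\eps(t)|^2$, I would derive ODEs for $\dot B_\eps$ and $\dot I_\eps$; the viscous terms contribute explicit lower-order corrections (e.g.\ $-\nu\,\omega_t[x_2^{-1}\partial_{x_2}f]$ and $+2\nu\,\omega_t[1]$-type terms), which under $\nu\le\eps^2|\log\eps|^\gamma$ are negligible relative to the relevant scales. The external field $F^\eps$ enters these identities through $\omega_t[F^\eps\cdot\nabla f]$; the terms $\widetilde F^\eps$ and $\bar F^\eps$ are controlled directly by their bounds Eqs.~\eqref{reg_F} and \eqref{picc_F} (both of size $o(1/|\log\eps|)$ against the mass $a/|\log\eps|$, hence negligible after the relevant time integration), while $\widehat F^\eps$, although of dangerous size $\widehat C/(\eps|\log\eps|)$, is multiplied by the vorticity mass that $\omega_\eps$ places in the region $|x_2-r_0|>\widehat r$, which must be shown to be superpolynomially small.

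The core quantitative input is therefore an a priori ``most of the mass stays put'' estimate: I would show that for any fixed $\ell>0$ the vorticity mass outside $\Sigma(\zeta_\eps(t)\,|\,\ell)$ decays faster than any power of $\eps$ (or at least like $\eps^k$ for $k$ as large as needed), uniformly on $[0,T]$. In the inviscid case this follows from a Gronwall/iteration argument on the quantity $\mu_\eps(t,R) := \int_{|x-B_\eps(t)|>R}\omega_\eps\,\rmd x$, exploiting that the self-induced velocity $u$ is bounded by $C|\log\eps|\cdot(\text{mass})$ plus a logarithmically singular but integrable kernel, so a particle far from the center moves slowly. In the viscous case one must add the diffusion: I would use the representation of $\omega_\eps/x_2$ as a solution of the advection–diffusion equation Eq.~\eqref{cons-omr} and the associated (sub)stochastic representation — the vorticity $\omega_\eps/x_2$ at time $t$ is the expectation, along the diffusion with drift $(u_1, u_2 - 3\nu/x_2)$ and diffusivity $\nu$, of the initial datum — so that mass escaping a ball of radius $R$ requires the diffusion to travel distance $\gtrsim R-\eps$ in time $\le T$; since the noise has variance $\sim\nu t\le\eps^2|\log\eps|^\gamma T$, Gaussian large-deviation bounds give an escape probability bounded by $\exp(-cR^2/(\nu t))$, which for $R$ fixed and $\nu\to0$ is indeed smaller than any power of $\eps$. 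Combining the drift contribution (handled as in the Euler case) with this diffusive contribution yields the required decay. This decay estimate, fed back into the $\dot B_\eps$ identity, kills the $\widehat F^\eps$ contribution and also shows $\omega_t[x_2^{-1}]$-type quantities are $\approx a/(r_0|\log\eps|)$, which is what produces the $\tfrac{a}{4\pi r_0}$ propagation speed.

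With the mass-localization estimate in hand, the remaining steps are: (1) integrate the $\dot B_\eps$ identity to get $B_\eps(t) = \zeta^0 + \tfrac{a}{4\pi r_0}(1,0)t + o(1)$, i.e.\ $B_\eps(t)\to\zeta(t)$ — here one needs that the self-interaction term $\frac{|\log\eps|}{a}\int u_1(x,t)\omega_\eps(x,t)\,\rmd x$, after symmetrization of the kernel $H_1$, equals $\tfrac{a}{4\pi r_0}+o(1)$, which is the classical vortex-ring computation reproducing Fraenkel's formula Ref.~\cite{Fr70} and uses the tight concentration together with the bound Eq.~\eqref{Mgamma} to control the logarithmic kernel near the diagonal; (2) show $I_\eps(t)\le \eps^2|\log\eps|\,\cdot o(1)$ or similar, using that $\dot I_\eps$ is small (the self-advection contributes zero to the leading antisymmetric part, $F^\eps$ contributes negligibly by the bounds above, and viscosity contributes $+2\nu\cdot(\text{mass})\le C\eps^2|\log\eps|^{\gamma-1}$); (3) a Chebyshev argument then converts the $I_\eps$ bound into concentration of mass $a/|\log\eps|$ inside a ball of radius $\varrho_\eps\to0$ centered at $\zeta_\eps(t) := B_\eps(t)$ (or a slight modification), giving Eq.~\eqref{massa}. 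Routine parts — the precise kernel asymptotics for $H_1, H_2$, the integration-by-parts justifications at $x_2=0$ and $x_2=\infty$, and the energy bound under $\nu\le\eps^2|\log\eps|^\gamma$ — I would relegate to the Appendices, as the authors indicate. The main obstacle is unquestionably step in paragraph two: obtaining the superpolynomial (or sufficiently high-power) decay of the escaping mass \emph{uniformly in time} in the presence of diffusion, since the naive Gronwall estimate loses too much and one must carefully combine the stochastic representation for $\omega_\eps/x_2$ with the drift estimates; controlling the singular term $-3\nu/x_2$ in the drift near $x_2=0$ and the $-\nu\omega_\eps/x_2^2$ sink term also requires care, though the standing assumption $\min r_i>2D$ keeps the relevant dynamics away from the axis for the times considered.
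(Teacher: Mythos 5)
Your overall architecture (center of vorticity, moments of inertia, a strip-localization estimate to neutralize $\widehat F^\eps$, a bootstrap on the stopping time) matches the paper's, but two of your key steps fail as stated. The decisive one is the final concentration step: you propose to obtain Eq.~\eqref{massa} by a Chebyshev argument from a bound of the type $I_\eps(t)\le\eps^2|\log\eps|\cdot o(1)$. No bound of that order is attainable: the contributions of $F^\eps$ and of the non-planar part $\mc R$ of the kernel to the derivative of the moments are of size $|\log\eps|^{-2}$ per unit time, so the best one gets (and what the paper proves) is $I_\eps^*(t)\le C|\log\eps|^{-2}$ for the cut-off axial moment and only $J_\eps(t)\le C|\log\eps|^{-1}$ for the $x_1$-moment. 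Chebyshev then bounds the mass outside a disk of radius $\varrho_\eps$ by $C/(\varrho_\eps^2|\log\eps|)$, which is not $o(|\log\eps|^{-1})$ for any $\varrho_\eps\to0$. In the paper, Eq.~\eqref{massa} comes instead from Lemma~\ref{lem:4}, whose proof (Appendix~\ref{sec:B}) rests on a lower bound for the kinetic energy, $E(t)\ge E(0)-C|\log\eps|^{-(2-\gamma)}$, combined with the abstract concentration lemma of Ref.~\cite{BruMar}; this is precisely where the hypothesis $\nu\le\eps^2|\log\eps|^\gamma$ is consumed, and it is the step you dismiss as a routine appendix item. The moment bounds are used only afterwards, to identify the concentration point $q_\eps(t)$ with $B_\eps(t)$ and thereby extract the speed $a/(4\pi r_0)$.

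The second gap is in the strip-localization. Your stochastic representation of $\omega_\eps/x_2$ with the Gaussian estimate $\exp(-cR^2/(\nu t))$ treats the diffusive displacement as if it were independent of the drift, but the drift $u$ is of size $1/(\eps|\log\eps|)$ near the core: a diffusive excursion of order $\sqrt{\nu t}$ into the high-velocity region is then transported a macroscopic distance, so the drift and noise contributions cannot simply be superposed, and ``handled as in the Euler case'' does not close the argument. The paper proceeds purely deterministically, iterating on the mollified masses $\mu_t(R,h)$ through the weak formulation: the viscous term contributes $C\nu(h^{-1}+h^{-2})m_t(R)$, harmless since $\nu\le\eps^2|\log\eps|^\gamma$ while $h\sim|\log\eps|^{-(1+k)}$, and the transport term is controlled by the antisymmetry of $K$ together with the bound on $I_\eps^*$. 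Relatedly, the cut-off $G(x_2)$ in the definitions of $B_\eps^*$ and $I_\eps^*$ is not optional: for $\nu>0$ the support reaches $x_2=0$ instantly, where the kernels $L$, $\mc R$ and the term $-\nu x_2^{-1}\partial_{x_2}f$ are singular, so your appeal to the dynamics staying away from the axis is not available.
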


The proof of Theorem \ref{thm:2} is the content of the following section, here we briefly summarize the whole strategy. 

We fix $\chi>2$ and define
\begin{equation}
\label{ansatz_massa_vort}
T_\eps := \sup\left\{t\in [0,T] \colon \int_{|x_2-r_0|> \widehat r} \!\rmd x \, \omega_\eps(x,t) \le \eps^\chi \;\; \forall\, s\in [0,t] \right\},
\end{equation}
with $\widehat r$ as in Eq.~\eqref{sing_Fc}. We assume $\eps < \widehat r$, so that, in view of Eq.~\eqref{initialF}, $T_\eps>0$ (but clearly, up to now, $T_\eps$ could possibly vanish as $\eps\to 0$).

We analyze the dynamics during the time interval $[0,T_\eps]$ and, making use of an a priori estimate on a particular integral quantity (the axial moment of inertia) proved in Lemma \ref{lem:2}, we show that for any $\eps$ small enough the vorticity mass remains highly concentrated in a thin horizontal strip centered around $x_2=r_0$ (note that, by Eq.~\eqref{initialF} this is true at time $t=0$). This strengthens the condition appearing in the definition Eq.~\eqref{ansatz_massa_vort} of $T_\eps$, so that, by continuity, we conclude that not only $T_\eps$ does not vanish as $\eps\to 0$ but actually $T_\eps = T$, definitively for any $\eps$ small enough. This is the content of Lemma \ref{lem:3} and Corollary \ref{cor:1}.

Next, in Lemma \ref{lem:4}, we first extend to the present context  a ``concentration result'' obtained in Ref.~\cite{BruMar} in absence of external field, which states that large part of the vorticity remains confined in a disk whose size is infinitesimal as $\eps\to 0$. At this point the proof of Theorem \ref{thm:2} is almost concluded. It remains to characterize the motion of the center of vorticity in the $x_1$-direction, with constant speed $a/(4\pi r_0)$ when $\eps\to 0$.

\subsection*{A notation warning} In what follows, we shall denote by $C$ a generic positive constant independent of $\eps$ and $\nu$, whose numerical value may change from line to line and it may possibly depend on the various parameters $\zeta^0=(z_0,r_0)$, $M, \widehat r, \widehat C, \widetilde C$, and $\bar C$ appearing in the assumptions on initial datum and external field, as well as on the given time $T$. Furthermore, throughout the proof the constant $a$ in Eq.~\eqref{MgammaF} will be put equal $1$.

\section{Proof of Theorem \ref{thm:2}}
\label{sec:4}

The following weak formulation will be used, which is a direct generalization of Eq.~\eqref{weq},
\begin{equation}
\label{weqF}
\begin{split}
\frac{\rmd}{\rmd t} \int\!\rmd x\, \omega_\eps(x,t) f(x,t) = &  \int\!\rmd x\, \omega_\eps(x,t) \big[ (u+F^\eps) \cdot \nabla f + \partial_t f \big](x,t) \\  & + \nu \int\!\rmd x\, \omega_\eps(x,t) \left(\Delta f - \frac{1}{x_2}\partial_{x_2} f \right)(x,t)\,,
\end{split}
\end{equation}
where $\Delta = \partial^2_{x_1}+\partial^2_{x_2}$. We remark also the generalization of Eq.~\eqref{cons-omr},
\begin{equation}
\label{cons-omrF}
\left[\partial_t + \left(u+F^\eps - \frac{3\nu}{x_2} \begin{pmatrix} 0 \\ 1 \end{pmatrix} \right) \cdot \nabla \right] \left(\frac{\omega_\eps}{x_2}\right)  = \nu \Delta \left(\frac{\omega_\eps(x,t)}{x_2} \right),
\end{equation}
which implies, in particular, a parabolic maximum principle for the ratio $\omega_\eps/x_2$, see Ref.~\cite{UY},
\begin{equation}
\label{maxpri}
\sup_x\frac{\omega_\eps(x,t)}{x_2} \le  \sup_x\frac{\omega_\eps(x,0)}{x_2} \quad \forall\, t\ge 0\,.
\end{equation}
Finally, we notice that the total vorticity is non-increasing in time. More precisely, it can be proved that
\begin{equation}
\label{dotom<}
\frac{\rmd}{\rmd t} \int\! \rmd x \, \omega_\eps(x,t) = -2 \nu \int\! \rmd x_1 \, \partial_{x_2}\omega_\eps(x_1,0,t) < 0\,,
\end{equation}
see Ref.~\cite[Eq. (68)]{Gal13}. Therefore, from Eq.~\eqref{MgammaF} with $a=1$,
\begin{equation}
\label{cons-mass}
\int\!\rmd x\, \omega_\eps(x,t) \le \frac{1}{|\log\eps|} \quad \forall\, t\ge 0\,.
\end{equation}

Recalling Eq.~\eqref{u=}, it is useful to decompose the velocity field as
\begin{equation}
\label{decom_u}
u(x,t) = \widetilde u(x,t)  + \int\!\rmd y\, L(x,y)\, \omega_\eps(y,t) +  \int\!\rmd y\, \mc R(x,y) \omega_\eps(y,t) \,,
\end{equation}
where $\widetilde u(x,t)=\int\!\rmd y\, K(x-y)\, \omega_\eps(y,t)$ with\footnote{We adopt the notation $v^\perp :=(v_2,-v_1)$ for any given $v = (v_1,v_2)$.}
\begin{equation}
\label{vel-vor3}
K(x) = \nabla^\perp  \mc G(x)\,, \quad \mc G(x) := - \frac{1}{2\pi} \log|x|\,,
\end{equation}
and
\begin{equation}
\label{bc_bound}
L(x,y) = \frac{1}{4\pi x_2} \log\frac {1+|x-y|}{|x-y|}\begin{pmatrix} 1 \\ 0 \end{pmatrix}.
\end{equation}
In Ref.~\cite[Appendix B]{BuM2} it is shown that there is $C_0>0$ such that  the remaining term $\mc R(x,y) = (\mc R_1(x,y), \mc R_2(x,y)) = H(x,y) - K(x-y) - L(x,y)$ satisfies, for some $C_0>0$,
\begin{equation}
\label{sR}
|\mc R_1(x,y)| \le C_0 \frac{1+x_2+\sqrt{x_2y_2} \big(1+ |\log(x_2y_2)|\big)}{x_2^2}\,, \quad |\mc R_2(x,y)| \le \frac{C_0}{x_2}\,.
\end{equation}
It will also useful the following estimate, recently proved in Ref.~\cite{Fe-Sv}: if $u$ is  given by Eq.~\eqref{u=} then
\begin{equation}
\label{u_piccolo}
\| u \|_{L^\infty}\le C_* \| \omega_\eps \|_{L^1}^{1/4}
\| x_2^2 \,\omega_\eps \|_{L^1}^{1/4} \| \omega_\eps /x_2 \|_{L^\infty}^{1/2}\,,
\end{equation}
where $C_*>0$ is an absolute constant.

\subsection{Estimates on the radial motion}
\label{sec:4.1}

In the quoted papers Refs.~\cite{BuM2,ButCavMar}, a central role is played by the center of vorticity $B_\eps(t)= (B_{\eps,1}(t), B_{\eps,2}(t))$ and the axial moment of inertia with respect to $B_{\eps,2}(t)$, defined by
\begin{align}
\label{c.m.}
B_{\eps}(t) & = |\log\eps|  \int\!\rmd x\, \omega_\eps(x,t) \, x\,, \\ \label{moment} I_\eps(t) & = \int\! \rmd x\,  \omega_\eps(x,t) \left(x_2-B_{\eps,2}(t)\right)^2\,,
\end{align}
whose time evolution along the Euler flow can be efficiently controlled, giving rise, in particular, to an upper bound on $I_\eps(t)$. It is worthwhile to remark that in our case  $B_\eps(t)$ is not really equal to the center of vorticity because, due to Eq.~\eqref{dotom<}, $\int\!\rmd x\, \omega_\eps(x,t) < |\log\eps|^{-1}$ for $t>0$. On the other hand, as a straightforward byproduct of our analysis, we will obtain that
\[
\lim_{\eps\to 0} \left| B_{\eps}(t) - \frac{\int\!\rmd x\, \omega_\eps(x,t) \,x}{\int\!\rmd x\, \omega_\eps(x,t)} \right| = 0 \quad \forall\,t\in [0,T]\,,
\]
and for this reason $B_\eps(t)$ will be (improperly) named center of vorticity also in this manuscript.

Anyway, this analysis would require the application of Eq.~\eqref{weqF} to compute the time derivative of $B_\eps(t)$, which presents a difficulty. Indeed, when $\nu\ne 0$ the vorticity does not remain supported away from the symmetry axis, and the last term in Eq.~\eqref{weqF}, as well as the kernels $L(x,y)$ and $\mc R(x,y)$ appearing in Eq.~\eqref{decom_u}, are singular as $x_2 \to 0$. This unessential singularity can be avoided by introducing the following cut-offed versions of $B_\eps(t)$ and $I_\eps(t)$,
\begin{align}
\label{c.m.*}
B_\eps^*(t) & = |\log\eps|  \int\!\rmd x\, \omega_\eps(x,t)\, G(x_2) \, x \,, \\ \label{moment*} I_\eps^*(t) & =  \int\!\rmd x\, \omega_\eps(x,t) \left(x_2-B_{\eps,2}^*(t)\right)^2G(x_2) \,,
\end{align}
where
\begin{equation}
\label{G}
G\in C^\infty(\bb R;[0,1])  \quad \text{ is such that }\,\, G(x_2) = 
\begin{cases}
1 &\text{if }\;\; x_2\ge r_0-\widehat r\,, \\ 0 &\text{if }\;\; x_2 \le (r_0-\widehat r)/2\,.
\end{cases}
\end{equation}
We analyze the time evolution of  $B_{\eps,2}^*(t)$ and $I_\eps^*(t)$ up to time $T_\eps$, showing that $B_{\eps,2}^*(t)$ and $I_\eps^*(t)$ enjoy the same kind of estimates obtained in Ref.~\cite{ButCavMar} for $B_{\eps,2}(t)$ and $I_\eps(t)$ in the Euler case.\footnote{Notice however that, by Eq.~\eqref{ansatz_massa_vort},
\[
|B_{\eps,2}^*(t) -B_{\eps,2}(t) | \le C |\log\eps|\eps^\chi \,, \quad |I_\eps^*(t) - I_\eps(t)| \le C \eps^\chi \qquad \forall\, t\in [0,T_\eps]\,.
\]
}

\begin{lemma}
\label{lem:2}
Under the hypothesis of Theorem \ref{thm:2} we have that
\begin{equation}
\label{growth B}
|\dot B_{\eps,2}^*(t)| \leq \frac{C}{|\log\eps|} \quad \forall\, t\in [0,T_\eps]
\end{equation}
and
\begin{equation}
\label{Iee}
I_\eps^*(t) \le \frac{C}{|\log\eps|^2} \quad \forall\, t\in [0,T_\eps]\,.
\end{equation}
\end{lemma}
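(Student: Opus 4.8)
\textbf{Proof plan for Lemma \ref{lem:2}.}

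The plan is to compute $\dot B_{\eps,2}^*(t)$ and $\dot I_\eps^*(t)$ via the weak formulation \eqref{weqF}, using the test functions $f_1(x)=x_2\,G(x_2)$ and (essentially) $f_2(x,t)=(x_2-B_{\eps,2}^*(t))^2 G(x_2)$, and to bound each resulting term. For the first estimate, inserting $f_1$ into \eqref{weqF} and using the decomposition \eqref{decom_u} of $u$, the self-interaction contribution from the leading Biot--Savart kernel $K$ vanishes by antisymmetry (since $K(x-y)=-K(y-x)$ and $\partial_{x_2} f_1$ is, on the bulk region where $G\equiv 1$, just the constant $1$ paired against the second component of $K$, whose integral against a symmetric expression cancels — exactly as in Ref.~\cite{ButCavMar}); the contributions of $L$ and $\mc R$ are $O(1/|\log\eps|)$ using \eqref{cons-mass}, the bounds \eqref{sR}, and the fact that on $\supp G$ we have $x_2$ bounded below. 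The external field splits into the three pieces $\widehat F^\eps,\widetilde F^\eps,\bar F^\eps$: the term $\widetilde F^\eps$ gives $O(1/|\log\eps|)$ directly from \eqref{reg_F} and \eqref{cons-mass}; the term $\bar F^\eps$ gives $O(\eps^\beta)$ from \eqref{picc_F}; and the singular term $\widehat F^\eps$, while of size $\eps^{-1}|\log\eps|^{-1}$ by \eqref{sing_F}, is supported in $\{|x_2-r_0|>\widehat r\}$, where by the definition \eqref{ansatz_massa_vort} of $T_\eps$ the vorticity mass is at most $\eps^\chi$, so this term contributes $O(\eps^{\chi-1}|\log\eps|^{-1})$, negligible since $\chi>2$. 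Finally the viscous term $\nu\,\omega_\eps[\Delta f_1 - x_2^{-1}\partial_{x_2} f_1]$ is bounded by $C\nu\|\omega_\eps\|_{L^1}\le C\nu|\log\eps|^{-1}$ on $\supp G$ (all derivatives of $G$ being bounded and $x_2$ bounded below there), which is $O(\eps^2|\log\eps|^{\gamma-1})$ and hence negligible. Collecting terms yields \eqref{growth B}.

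For the moment of inertia, I would differentiate \eqref{moment*} and use \eqref{weqF} with the time-dependent test function $(x_2-B_{\eps,2}^*(t))^2 G(x_2)$, picking up an extra term $-2\dot B_{\eps,2}^*(t)\int\!\rmd x\,\omega_\eps (x_2-B_{\eps,2}^*)G$ from the $\partial_t f$ contribution; the latter is controlled by \eqref{growth B} together with a Cauchy--Schwarz bound $\big|\int\omega_\eps(x_2-B_{\eps,2}^*)G\big|\le \|\omega_\eps\|_{L^1}^{1/2}(I_\eps^*)^{1/2}$, producing a term of order $|\log\eps|^{-3/2}(I_\eps^*)^{1/2}$. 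The gradient $\nabla f_2$ has second component $2(x_2-B_{\eps,2}^*)G + (x_2-B_{\eps,2}^*)^2 G'$; on $\supp G'$ the factor $(x_2-B_{\eps,2}^*)$ is of order one but there the vorticity is small by the $T_\eps$-constraint, while on the bulk the dominant piece is $2(x_2-B_{\eps,2}^*)$. Against the leading kernel $K$ this again cancels by the antisymmetry argument of Ref.~\cite{ButCavMar} (the key point being that $K_2(x-y)\,[(x_2-B_{\eps,2}^*)+(y_2-B_{\eps,2}^*)]$ is antisymmetric under $x\leftrightarrow y$, and one rewrites $2(x_2-B_{\eps,2}^*)$ as the sum plus the antisymmetric remainder $(x_2-y_2)$, the latter paired with $K_2(x-y)=-\frac1{2\pi}\frac{x_2-y_2}{|x-y|^2}$ giving an integrable quantity bounded using \eqref{cons-mass}). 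The $L$- and $\mc R$-contributions, the $\widetilde F^\eps$- and $\bar F^\eps$-contributions, and the viscous term are all estimated as before and shown to be $O(|\log\eps|^{-2})$ or smaller, using $\|\omega_\eps\|_{L^1}\le|\log\eps|^{-1}$, $\nu\le\eps^2|\log\eps|^\gamma$, and — for the viscous term whose double derivative of $f_2$ produces a constant $2G$ plus lower-order pieces — the crude bound $\nu\|\omega_\eps\|_{L^1}\le C\eps^2|\log\eps|^{\gamma-1}$. The $\widehat F^\eps$-term carries the large factor $\eps^{-1}|\log\eps|^{-1}$ but is multiplied by the vorticity mass $\eps^\chi$ in its support and by the bounded factor $|\nabla f_2|$, giving $O(\eps^{\chi-1}|\log\eps|^{-1})$, negligible. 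Altogether $\dot I_\eps^*(t)\le C|\log\eps|^{-2} + C|\log\eps|^{-3/2}(I_\eps^*(t))^{1/2}$, and since $I_\eps^*(0)\le C\eps^2$ by \eqref{initialF} (the initial support having diameter $\eps$), a Gronwall-type / quadratic-comparison argument on $[0,T_\eps]\subset[0,T]$ gives $I_\eps^*(t)\le C|\log\eps|^{-2}$, which is \eqref{Iee}.

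The main obstacle I anticipate is the careful treatment of the self-interaction term, specifically making rigorous the cancellation of the leading Biot--Savart contribution in the presence of the cutoff $G$: the antisymmetry that kills the $K$-term is exact only where $G\equiv 1$, so one must show that the "boundary layer" where $G$ varies — which is precisely the region $\{x_2\lesssim r_0-\widehat r\}$, disjoint from any bulk concentration — contributes negligibly, and this is exactly where the a priori constraint \eqref{ansatz_massa_vort} defining $T_\eps$ is essential (this is why the estimates are stated only for $t\le T_\eps$). A secondary technical point is handling the mild singularity of $\mc R_1$ as $x_2y_2\to 0$ in \eqref{sR}: when $y$ ranges over a region where $\omega_\eps$ lives but $x_2$ is comparable to $r_0$ (since we test against $G$-weighted quantities), the logarithmic factor $|\log(x_2y_2)|$ is harmless, but one should check the symmetric situation does not arise because the $x$-integration is effectively restricted to $\supp G$ where $x_2$ is bounded below. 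I expect these points to be exactly the places where the argument genuinely differs from the inviscid case of Ref.~\cite{ButCavMar} and where the paper will need to invoke the new a priori structure; the remaining estimates are routine applications of \eqref{cons-mass}, \eqref{maxpri}, \eqref{u_piccolo}, and the hypotheses on $F^\eps$.
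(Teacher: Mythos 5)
Your plan is correct and follows essentially the same route as the paper: the weak formulation \eqref{weqF} with the cut-off test functions $x_2G(x_2)$ and $(x_2-B_{\eps,2}^*(t))^2G(x_2)$, cancellation of the leading Biot--Savart self-interaction by antisymmetry (the paper implements this via the exact identities \eqref{tu=} and \eqref{M2d=0} rather than your direct symmetrization of the $K$-double integral, but the two are equivalent here), control of the boundary layer where $G\not\equiv 1$ through the $T_\eps$-constraint \eqref{ansatz_massa_vort} combined with a rearrangement/maximum-principle bound on $\widetilde u_2$, Cauchy--Schwarz for the $F^\eps$- and $\dot B^*$-terms, and the differential inequality $|\dot I_\eps^*|\le C|\log\eps|^{-3/2}\sqrt{I_\eps^*}+C|\log\eps|^{-2}$ closed by a continuity argument. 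The only slip is your explicit formula for $K_2$ (with the paper's convention $K=\nabla^\perp\mc G$ one has $K_2(x-y)=\frac{1}{2\pi}\frac{x_1-y_1}{|x-y|^2}$, not $-\frac{1}{2\pi}\frac{x_2-y_2}{|x-y|^2}$), which does not affect the argument since $|K_2(x-y)|\,|x_2-y_2|$ is still bounded.
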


\begin{proof}
Using the decomposition Eq.~\eqref{decom_u} for $u(x,t)$, observing that 
\begin{equation}
\label{tu=}
\int\!\rmd x\, \widetilde u(x,t)\,\omega_\eps(x,t) = 0
\end{equation}
(which comes from the explicit form of $K(x)$ given in Eq.~\eqref{vel-vor3}), and since $L_2(x,y)=0$, see Eq.~\eqref{bc_bound}, from Eq.~\eqref{weqF} we get
\[
\begin{split}
& \dot B_{\eps,2}^*(t) = |\log\eps|\int\! \rmd x\,\omega_\eps(x,t)\,  \widetilde u_2(x,t)\,  [G(x_2) + x_2 G'(x_2)-1]\\ & \qquad+ |\log\eps|\int\! \rmd x\,\omega_\eps(x,t)\, \bigg(\int\!\rmd y\, \mc R_2(x,y) \omega_\eps(y,t) + F^\eps_2(x,t) \bigg) [G(x_2)+ x_2 G'(x_2)]    \\ &\qquad  + \nu |\log\eps|\int\! \rmd x\,\omega_\eps(x,t)\, \bigg(x_2G''(x_2) + G'(x_2) - \frac{G(x_2)}{x_2}\bigg) =: T_1 + T_2 + T_3\,.
\end{split}
\]
From Eq.~\eqref{G} and \eqref{vel-vor3},
\[
\begin{split}
|T_1| & \le C |\log\eps| \int_{x_2 \le r_0 - \widehat r} \!\rmd x \,  \omega_\eps(x,t) |\widetilde u_2(x,t)| \\ & \le C |\log\eps| \int_{x_2 \le r_0 - \widehat r} \!\rmd x \, \omega_\eps(x,t) \int\! \rmd y\, \frac{\omega_\eps(y,t)}{2\pi|x-y|} \\ & \le C |\log\eps| \int_{x_2 \le r_0 - \widehat r} \!\rmd x \, \omega_\eps(x,t) \int_{y_2 > r_0 + \widehat r} \! \rmd y\, \frac{\omega_\eps(y,t)}{2\pi|x-y|} \\ & \quad +C |\log\eps| \int_{x_2 \le r_0 - \widehat r} \!\rmd x \, \omega_\eps(x,t) \int_{y_2\le  r_0 + \widehat r} \! \rmd y\, \frac{\omega_\eps(y,t)}{2\pi|x-y|}  =: T_{1,1} + T_{1,2}\,.
\end{split}
\]
Clearly, by Eq.~\eqref{ansatz_massa_vort},
\[
T_{1,1} \le C |\log\eps| \int_{x_2 \le r_0 - \widehat r} \!\rmd x \, \omega_\eps(x,t) \int_{y_2 > r_0 + \widehat r} \! \rmd y\, \frac{\omega_\eps(y,t)}{4\pi\widehat r} \le C |\log\eps| \eps^{2\chi} \quad \forall\, t\in [0,T_\eps]\,.
\]
To estimate $T_{1,2}$, we observe that since $1/|x-y|$ is monotonically unbounded as $y\to x$, the maximum of $\int_{y_2\le  r_0 + \widehat r} \rmd y\, \omega_\eps(y,t)/|x-y|$ is achieved when we rearrange the vorticity mass as close as possible to the singularity. On the other hand, from the maximum principle Eq.~\eqref{maxpri} and the assumptions Eqs.~\eqref{MgammaF} and \eqref{initialF}, we have
\[
\omega(y,t) \le \frac{r_0 + \widehat r}{r_0-\eps} \frac{M}{\eps^2|\log\eps|} =: \mc M_\eps \quad \forall\, y_2\in [0,r_0+\widehat r]\,,
\]
and, by Eq.~\eqref{cons-mass}, $\int_{y_2\le  r_0 + \widehat r} \rmd y\, \omega_\eps(y,t) \le |\log\eps|^{-1}$, whence
\[
\int_{y_2\le  r_0 + \widehat r} \! \rmd y\, \frac{\omega_\eps(y,t)}{2\pi|x-y|} \le  \mc M_\eps \int_{\Sigma (0|\rho_\eps)}\!\rmd y'\, \frac{1}{2\pi|y'|} = \mc M_\eps \rho_\eps \;, 
\]
where the radius $\rho_\eps$ is such that $\mc M_\eps \pi \rho_\eps^2 = |\log\eps|^{-1}$. Therefore, by Eq.~\eqref{ansatz_massa_vort},
\[
T_{1,2} \le C \eps^{\chi-1} \quad \forall\, t\in [0,T_\eps]\,.
\]
Regarding the term $T_2$, we have
\[
\begin{split}
|T_2| & \le C |\log\eps| \int_{x_2 \ge (r_0 - \widehat r)/2} \!\rmd x \,  \omega_\eps(x,t) \bigg(\int\!\rmd y\, |\mc R_2(x,y)|  \omega_\eps(y,t) +  |F^\eps_2(x,t)| \bigg) \\ & \le  C |\log\eps| \bigg\{\int_{x_2 \ge (r_0 - \widehat r)/2} \!\rmd x \, \omega_\eps(x,t) \frac{2C_0}{r_0 - \widehat r} \int\!\rmd y\,\omega_\eps(y,t) \\ & \quad + \frac{\widehat C}{\eps|\log\eps|} \int_{|x_2 - r_0| > \widehat r} \!\rmd x \,  \omega_\eps(x,t) + \bigg(\frac{\widetilde C}{|\log\eps|} + \bar C \eps^\beta\bigg) \int  \!\rmd x \,\omega_\eps(x,t) \bigg\} \\ & \le C \bigg(\eps^{\chi-1} + \frac{1}{|\log\eps|} + \eps^\beta\bigg) \qquad \forall\, t\in [0,T_\eps]\,,
\end{split}
\]
where we used Eqs.~\eqref{G}, \eqref{sR}, \eqref{sing_Fc}, \eqref{sing_F}, \eqref{reg_F}, \eqref{picc_F}, \eqref{ansatz_massa_vort}, and \eqref{cons-mass}. Finally, recalling we assume $\nu\le \eps^2|\log\eps|^\gamma$ and using again Eq.~\eqref{cons-mass},
\[
|T_3| \le C \nu |\log\eps| \frac{2}{r_0-\widehat r} \int_{x_2 \ge (r_0 - \widehat r)/2} \!\rmd x \,  \omega_\eps(x,t) \le C \eps^2|\log\eps|^\gamma\,.
\]
Recalling $\chi>2$, Eq.~\eqref{growth B} now follows from the above estimates.

To prove the second estimate Eq.~\eqref{Iee} we need to compute the time derivative $\dot I_\eps^*(t)$. From Eq.~\eqref{weqF} we have,
\[
\dot I_\eps^*(t)  = \mc K_1 +\mc K_{2,1} + \mc K_{2,2} + \mc K_3 + \mc K_4 \,,
\]
where
\[
\begin{split}
\mc K_1 & = \int\! \rmd x\,\omega_\eps(x,t)\, u_2(x,t) \big(x_2-B_{\eps,2}^*(t)) [2G(x_2) + (x_2-B_{\eps,2}^*(t)) G'(x_2)]\,, \\  \mc K_{2,1} & = \int\! \rmd x\,\omega_\eps(x,t)\, F^\eps_2(x,t) \big(x_2-B_{\eps,2}^*(t))^2 G'(x_2) \,, \\ \mc K_{2,2} & =  \int\! \rmd x\,\omega_\eps(x,t)\, F^\eps_2(x,t) \big(x_2-B_{\eps,2}^*(t)) 2G(x_2)\,, \\ \mc K_3 & = \nu \int\! \rmd x\,\omega_\eps(x,t)\, \bigg[ \frac{2B_{\eps,2}^*(t)}{x_2} G(x_2) + 4(x_2-B_{\eps,2}^*(t))  G'(x_2) \\ & \qquad\quad - (x_2-B_{\eps,2}^*(t))^2 \frac{G'(x_2)}{x_2} + (x_2-B_{\eps,2}^*(t))^2 G''(x_2)  \bigg], \\ \mc K_4 & = 2 \dot B_{\eps,2}^*(t)  \int\! \rmd x\,\omega_\eps(x,t)\, (B_{\eps,2}^*(t) - x_2) G(x_2)\,.
\end{split}
\]
We now remind that 
\begin{equation}
\label{M2}
M_2(t) := \int\!\rmd x\, \omega(x,t) x_2^2
\end{equation}
is a conserved quantity in absence of external field, see, e.g., Refs.~\cite{BCM00,Gal13}. More precisely, noticing that the right-hand side of Eq.~\eqref{weqF} vanishes for $f(x,t) = x_2^2$, this means that
\begin{equation}
\label{M2d=0}
\int\!\rmd x\, \omega_\eps(x,t) u_2(x,t) x_2 = 0\,,
\end{equation}
which can also be verified by direct inspection using Eqs.~\eqref{u=} and \eqref{H2}. This implies, together with Eq.~\eqref{tu=} and using Eq.~\eqref{decom_u}, that $\mc K_1$ can be rewritten as
\[
\begin{split}
\mc K_1 & = \int\! \rmd x\,\omega_\eps(x,t)\, \widetilde u_2(x,t) (x_2-B_{\eps,2}^*(t)) [2G(x_2) - 2  + (x_2-B_{\eps,2}^*(t)) G'(x_2)] \\ & \quad +  \int\! \rmd x\,\omega_\eps(x,t)\int\!\rmd y\,\mc R_2(x,y) \omega_\eps(y,t)  x_2 [2G(x_2) - 2  + (x_2-B_{\eps,2}^*(t)) G'(x_2)] \\ & \quad - B_{\eps,2}^*(t) \int\! \rmd x\,\omega_\eps(x,t) \int\!\rmd y\,\mc R_2(x,y) \omega_\eps(y,t) [2G(x_2) + (x_2-B_{\eps,2}^*(t)) G'(x_2)]\,.
\end{split}
\]
Note also that Eqs.~\eqref{initialF} and \eqref{growth B} give
\begin{equation}
\label{Br0}
|B_{\eps,2}^*(t) - r_0| \le \frac{C}{|\log\eps|} \quad \forall\, t\in [0,T_\eps]\,, 
\end{equation} 
and, in particular,
\begin{equation}
\label{boundB}
B_{\eps,2}^*(t) \le C \quad \forall\, t\in [0,T_\eps]\,.
\end{equation}
Therefore, because of the definition Eq.~\eqref{G} of $G$,
\[
\begin{split}
|\mc K_1| & \le C \int_{x_2 \le r_0-\widehat r} \! \rmd x\,\omega_\eps(x,t)\, \bigg( |\widetilde u_2(x,t)| +  x_2 \int\!\rmd y\,|\mc R_2(x,y)|  \omega_\eps(y,t) \bigg) \\ & \quad + C \int_{x_2 \ge (r_0-\widehat r)/2} \! \rmd x\,\omega_\eps(x,t) \int\!\rmd y\,|\mc R_2(x,y)| \omega_\eps(y,t)  \\ & \le C \int_{x_2 \le r_0-\widehat r} \! \rmd x\,\omega_\eps(x,t)\, \bigg( |\widetilde u_2(x,t)| + C_0 \int\!\rmd y\, \omega_\eps(y,t) \bigg) \\ & \quad + C \int_{x_2 \ge (r_0-\widehat r)/2} \! \rmd x\,\omega_\eps(x,t)  \frac{2C_0}{r_0 - \widehat r} \int\!\rmd y\,\omega_\eps(y,t)  \\ & \le C \bigg(\eps^{\chi-1} + \frac{1}{|\log\eps|^2}\bigg) \quad \forall\, t\in [0,T_\eps]\,,
\end{split}
\]
where in the last inequality we applied the previous estimate on $T_1$ and Eq.~\eqref{cons-mass}. Analogously,
\[
\begin{split}
|\mc K_{2,1}| & \le C  \int_{x_2 \ge (r_0 - \widehat r)/2} \!\rmd x \,  \omega_\eps(x,t) |F^\eps_2(x,t)| \\ & \le \frac{C}{|\log\eps|} \bigg(\eps^{\chi-1} + \frac{1}{|\log\eps|} + \eps^\beta\bigg) \quad \forall\, t\in [0,T_\eps]
\end{split}
\]
(see the previous estimate on $T_2$) and, from the Cauchy-Schwarz inequality,
\[
\begin{split}
|\mc K_{2,2}| & \le \bigg(\int\! \rmd x\,\omega_\eps(x,t)G(x_2) F^\eps_2(x,t)^2 \bigg)^{1/2}  \bigg(\int\! \rmd x\,\omega_\eps(x,t) (B_{\eps,2}^*(t) - x_2)^2 G(x_2)\bigg)^{1/2} \\ & \le  \frac{C}{|\log\eps|^{3/2}} \sqrt{\eps^{\chi-2}|\log\eps| + 1 + \eps^{2\beta} |\log\eps|^2}\, \sqrt{I_\eps^*(t)}  \quad \forall\, t\in [0,T_\eps]
\end{split}
\]
(see again the previous estimate on $T_2$ with now $ F^\eps_2(x,t)^2$ in place of $| F^\eps_2(x,t)|$). Finally,
\[
|\mc K_3| \le  C \nu \int_{x_2 \ge (r_0 - \widehat r)/2} \!\rmd x \,  \omega_\eps(x,t) \le C \eps^2|\log\eps|^{\gamma-1}\,
\]
(see the previous estimate on $T_3$) and, from the Cauchy-Schwarz inequality and Eq.~\eqref{growth B},
\[
\begin{split}
|\mc K_4| & \le \frac{C}{|\log\eps|}  \bigg(\int\! \rmd x\,\omega_\eps(x,t)G(x_2)\bigg)^{1/2}  \bigg(\int\! \rmd x\,\omega_\eps(x,t) (B_{\eps,2}^*(t) - x_2)^2 G(x_2)\bigg)^{1/2} \\ & \le  \frac{C}{|\log\eps|^{3/2}}\sqrt{I_\eps^*(t)}  \quad \forall\, t\in [0,T_\eps]\,,
\end{split}
\]
where in the last inequality we used  Eq.~\eqref{cons-mass} and $G\le 1$.

Collecting together all the previous estimates and recalling $\chi>2$ we conclude that 
\[
|\dot I_\eps^*(t)| \le \frac{C}{|\log\eps|^{3/2}}\sqrt{I_\eps^*(t)} + \frac{C}{|\log\eps|^2}  \quad \forall\, t\in [0,T_\eps]\,.
\]
Since Eq.~\eqref{initialF} implies $I_\eps^*(0) \leq 4\eps^2$, Eq.~\eqref{Iee} follows from the last differential inequality. Indeed, given $M>0$ let 
\[
T_M = \sup\left\{ t\in [0,T_\eps]\colon I_\eps^*(s) \le \frac{M}{|\log\eps|^2} \;\;  \forall\, s\in [0,t] \right\}.
\]
Clearly, $T_M>0$ for any $\eps$ small enough and
\[
|\dot I_\eps^*(t)| \le \frac{C\sqrt M}{|\log\eps|^{5/2}} + \frac{C}{|\log\eps|^2} \quad \forall\, t\in [0,T_M]\,.
\]
This implicates, as $T_M \le T_\eps \le T$,
\[
I_\eps^*(t) \le 4\eps^2 + \frac{CT\sqrt M}{|\log\eps|^{5/2}} + \frac{CT}{|\log\eps|^2} \quad \forall\, t\in [0,T_M]\,.
\]
If $M$ is chosen large enough then, for any $\eps$ small enough,
\[
I_\eps^*(t) \le \frac{M}{2|\log\eps|^2}  \quad \forall\, t\in [0,T_M]\,,
\]
which implies $T_M=T_\eps$ by continuity, whence Eq.~\eqref{Iee} (with $C=M$).
\end{proof}

As already mentioned, our aim is to strengthen the condition appearing in the definition Eq.~\eqref{ansatz_massa_vort}. This is the content of the next lemma, where we show that the vorticity mass out of a thin strip parallel to the $x_1$-axis and centered around $x_2=r_0$ is very small (less than any power of $\eps$, for $\eps\to 0$). With respect to  the analogous Ref.~\cite[Lemma 3.4]{ButCavMar}, here we have to take account of the presence of the viscosity (in particular, this implies that the vorticity is not compactly supported) and the presence of a less regular external field. However, these differences do not condition too much the reasoning and we just sketch the proof in Appendix \ref{sec:A}, where we explain in more detail the new parts.

\begin{lemma} 
\label{lem:3}
Let $m_t$ be defined as 
\begin{equation}
\label{mt}
m_t(h) = \int_{|x_2-B_{\eps,2}^*(t)|>h}\!\rmd x\,\omega_\eps(x,t) \,.
\end{equation}
Then, under the hypothesis of Theorem \ref{thm:2}, for each $\ell>0$  and  $k \in \big(0, \frac14\big)$,
\begin{equation}
\label{smt}
\lim_{\eps\to 0} \, \max_{ t \in [0, T_\eps]} \eps^{-\ell} m_t \left(\frac{1}{|\log\eps|^k} \right)  = 0\,.
\end{equation}
\end{lemma}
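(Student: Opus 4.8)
The strategy is to derive a differential inequality for a suitably defined ``energy'' of the vorticity mass lying outside a horizontal strip of half-width $h$ around $B_{\eps,2}^*(t)$, and then use a bootstrap/iteration on the strip width. Following the scheme of Ref.~\cite{ButCavMar}, I would introduce a smooth nonnegative weight $W = W_h(x_2)$ which vanishes for $|x_2 - B_{\eps,2}^*(t)| \le h/2$ and equals $1$ for $|x_2 - B_{\eps,2}^*(t)| \ge h$, with $|W'| \le C/h$ and $|W''| \le C/h^2$, and study the quantity
\[
\mu_t(h) := \int\!\rmd x\, \omega_\eps(x,t)\, W_h(x_2)\,,
\]
which dominates $m_t(h)$ up to negligible terms and is dominated by $m_t(h/2)$. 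Applying the weak formulation Eq.~\eqref{weqF} with $f = W_h$, and using the decomposition Eq.~\eqref{decom_u} of $u$, one gets $\dot\mu_t(h)$ as a sum of: (a) a term from $\widetilde u_2 W_h'$, where the key point is that $\widetilde u_2$ is bounded (by Eq.~\eqref{u_piccolo}, together with Eq.~\eqref{cons-mass} and the conserved/controlled quantity $M_2(t)$ and the maximum principle Eq.~\eqref{maxpri}, one has $\|u\|_{L^\infty} \le C$, actually much smaller); (b) a term from the $L$ and $\mc R$ kernels times $W_h'$, controlled via Eq.~\eqref{bc_bound}, Eq.~\eqref{sR}, and Eq.~\eqref{cons-mass}; (c) a term from the external field $F^\eps_2 W_h'$, handled by splitting according to Eq.~\eqref{ext_field}: the $\widehat F^\eps$ part is supported where $|x_2 - r_0| > \widehat r$, so its contribution is multiplied by $m_t(\text{const})$ which is $\le \eps^\chi$ on $[0,T_\eps]$ and hence carries an extra huge smallness; the $\widetilde F^\eps$ and $\bar F^\eps$ parts are bounded by $C/|\log\eps|$ and $C\eps^\beta$ respectively times $\|W_h'\|_\infty$; and (d) the viscous term $\nu\,\omega_\eps(\Delta W_h - x_2^{-1}\partial_{x_2}W_h)$, bounded by $C\nu h^{-2}\|\omega_\eps\|_{L^1} \le C\eps^2|\log\eps|^{\gamma-1} h^{-2}$ using $\nu \le \eps^2|\log\eps|^\gamma$.

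\textbf{The iteration.} The upshot is an estimate of the form
\[
\dot\mu_t(h) \le \frac{C}{h}\Big( \mu_t(h/2) + \text{(small)}\Big) + \frac{C}{h}\cdot\frac{1}{|\log\eps|} + \frac{C\eps^2|\log\eps|^{\gamma-1}}{h^2}\,,
\]
where the Lipschitz-in-$x$ estimate on $\widetilde u$ and the a priori bound Eq.~\eqref{Iee} on $I_\eps^*(t)$ — which already forces $m_t(h) \le I_\eps^*(t)/h^2 \le C/(h^2|\log\eps|^2)$ — are used to make the first bracket genuinely small when $h$ is a small negative power of $|\log\eps|$. One then runs the classical argument: fix a sequence of radii $h_n = h(1 + 2^{-n})/2$ interpolating between $h/2$ and $h$, and show by induction that $\mu_t(h_n) \le \eps^{\text{something growing}}$ by repeatedly integrating the differential inequality in time (with $\mu_0(h_n) = 0$ for $\eps$ small, since the initial support has diameter $\eps \ll h_n$), each step gaining a power because $\mu_t(h_{n})$ is bounded by $\mu_t(h_{n+1})$ times a controlled factor plus the source terms. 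After finitely many — or order $|\log\eps|$ many — steps one reaches any prescribed power $\eps^\ell$. Choosing $h = |\log\eps|^{-k}$ with $k < 1/4$ is exactly what makes the factor $C/h$ times the bound $C/(h^2|\log\eps|^2)$ from Eq.~\eqref{Iee} behave like $|\log\eps|^{3k-2}\to 0$, so the contraction in the iteration is effective; the constraint $k<1/4$ leaves margin for the accumulated losses over the iteration (and for the $h^{-2}$ in the viscous term, which is $\eps^2|\log\eps|^{\gamma-1+2k}$, still a positive power of $\eps$).

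\textbf{Main obstacle.} The principal difficulty — and the reason the proof differs from the inviscid Ref.~\cite[Lemma 3.4]{ButCavMar} — is twofold. First, the viscous term $\nu\Delta W_h$ is a genuine source (not present for Euler) that prevents $\mu_t$ from staying identically zero; one must verify it is dominated by a fixed power of $\eps$ uniformly over the iteration, which is exactly where $\nu \le \eps^2|\log\eps|^\gamma$ enters and why it cannot be substantially relaxed. Second, the external field is now only $\widetilde C/|\log\eps|$-bounded and Lipschitz rather than confined to disjoint strips, and the $\widehat F^\eps$ piece is singular of size $\widehat C/(\eps|\log\eps|)$; the saving grace is that $\widehat F^\eps$ is supported precisely in the region $|x_2-r_0|>\widehat r$ where, on $[0,T_\eps]$, the vorticity mass is $\le \eps^\chi$ with $\chi>2$, so the product $\widehat F^\eps \omega_\eps$ integrates to something like $\eps^{\chi-1}$ — still a positive power — and this is exactly the mechanism encoded in the definition Eq.~\eqref{ansatz_massa_vort} of $T_\eps$. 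Keeping all these error terms uniformly polynomially small through the $n$-fold (up to $O(|\log\eps|)$-fold) iteration, while the ``good'' factor $C/h_n$ grows only polylogarithmically, is the delicate bookkeeping; the details are relegated to Appendix~\ref{sec:A}.
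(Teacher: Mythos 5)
Your overall scheme (a mollified version of the outside mass, the weak formulation Eq.~\eqref{weqF}, the decomposition Eq.~\eqref{decom_u}, the use of $T_\eps$ to neutralize $\widehat F^\eps$, the moment of inertia, and an iteration over nested strips) is the right one, but the differential inequality you arrive at cannot prove the lemma. You write
\[
\dot\mu_t(h) \le \frac{C}{h}\,\mu_t(h/2) + \frac{C}{h\,|\log\eps|} + \frac{C\eps^2|\log\eps|^{\gamma-1}}{h^2}\,,
\]
with \emph{additive} source terms coming from the external field and the viscosity. With $h=|\log\eps|^{-k}$ the first source is of order $|\log\eps|^{k-1}$; integrating in time already caps the achievable bound at $\mu_t\lesssim|\log\eps|^{k-1}$, which is only polylogarithmically small, and no iteration can beat an additive source. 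The point of the actual argument (Appendix~\ref{sec:A}) is that $W'$ and $W''$ are supported in the annulus $R\le|x_2-B_{\eps,2}^*(t)|\le R+h$, so \emph{every} term --- the $\widetilde F^\eps$, $\bar F^\eps$ and viscous contributions included --- is bounded by a coefficient times $m_t(R)$ itself: the inequality is $\dot\mu_t(R,h)\le A_\eps(R,h)\,m_t(R)\le A_\eps(R,h)\,\mu_t(R-h,h)$ with $A_\eps\le C|\log\eps|^q$, $q<1$, and no forcing. Since $\mu_0=0$, iterating $n=\lfloor|\log\eps|\rfloor$ times yields $(C|\log\eps|^q t)^n/n!$, which by Stirling decays faster than any power of $\eps$. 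Your additive formulation loses exactly this homogeneity in $m_t$.

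Two further gaps. First, your nested radii $h_n=h(1+2^{-n})/2$ shrink geometrically, so $\|W'\|_\infty\sim 2^n/h$ grows geometrically and after $O(|\log\eps|)$ steps contributes a factor $2^{|\log\eps|}=\eps^{-\log 2}$, destroying the factorial gain; the paper instead uses $n$ \emph{equally spaced} radii between $|\log\eps|^{-k}$ and $\tfrac12|\log\eps|^{-k}$ with increment $h\sim 1/(n|\log\eps|^{k})$, so the per-step constant stays polylogarithmic. This two-scale choice ($R\sim|\log\eps|^{-k}$, $h\sim|\log\eps|^{-(1+k)}$) is also what produces the constraint $k<\tfrac14$, through the requirement $\frac{1}{h^2R^2|\log\eps|^2}\sim|\log\eps|^{4k}\le|\log\eps|^{q}$ with $q<1$; your single-scale bookkeeping (giving $|\log\eps|^{3k-2}$) does not yield $k<\tfrac14$ at all. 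Second, $\widetilde u_2$ is \emph{not} bounded by a constant: Eq.~\eqref{u_piccolo} combined with Eqs.~\eqref{maxpri}, \eqref{MgammaF}, and \eqref{cons-mass} gives only $\|u\|_{L^\infty}\le C/(\eps|\log\eps|)$, which would make the per-step coefficient of order $\eps^{-1}$ and kill the iteration. The term $\int \rmd x\,\omega_\eps W'\widetilde u_2$ must be symmetrized using the antisymmetry of $K$ (as in the term $\mc H_1$ of Appendix~\ref{sec:A}), and the resulting difference $W'(x_2-B_{\eps,2}^*)-W'(y_2-B_{\eps,2}^*)$ is then controlled with the help of the moment-of-inertia bound Eq.~\eqref{Iee}; this is where Eq.~\eqref{Iee} actually enters, not through the Chebyshev bound $m_t\le I_\eps^*/h^2$ that you invoke.
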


\begin{corollary}
\label{cor:1}
Under the hypothesis of Theorem \ref{thm:2}, there is $\eps_0\in (0,1)$ such that $T_\eps=T$ for any $\eps\in (0,\eps_0)$ and, for each $\ell>0$ and $k \in \big(0, \frac14\big)$,
\begin{equation}
\label{smt*}
\lim_{\eps\to 0} \, \max_{t \in [0, T]} \eps^{-\ell} \int_{|x_2-r_0|>\frac{1}{|\log\eps|^k}}\!\rmd x\,\omega_\eps(x,t) = 0\,.
\end{equation}
\end{corollary}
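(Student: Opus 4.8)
The plan is to derive Corollary~\ref{cor:1} directly from Lemma~\ref{lem:3} by a bootstrap (continuity) argument on the definition Eq.~\eqref{ansatz_massa_vort} of $T_\eps$. The key point is that Lemma~\ref{lem:3} controls the vorticity mass outside a strip centered at the (cut-offed) center of vorticity $B_{\eps,2}^*(t)$, whereas $T_\eps$ is defined in terms of the mass outside a fixed strip centered at $r_0$; the two strips are close because of Eq.~\eqref{Br0}, which says $|B_{\eps,2}^*(t)-r_0|\le C/|\log\eps|$ on $[0,T_\eps]$. Since $1/|\log\eps|^k \to 0$ more slowly than $1/|\log\eps|$ for $k<1$, the strip $\{|x_2-B_{\eps,2}^*(t)|>1/|\log\eps|^k\}$ eventually contains the strip $\{|x_2-r_0|>\widehat r\}$ (indeed $\widehat r$ is a fixed positive constant, so for $\eps$ small $1/|\log\eps|^k + C/|\log\eps| < \widehat r$), hence
\[
\int_{|x_2-r_0|>\widehat r}\!\rmd x\,\omega_\eps(x,t) \le m_t\!\left(\frac{1}{|\log\eps|^k}\right) \le o(\eps^\ell) \quad \forall\, t\in[0,T_\eps]\,,
\]
where the last bound comes from Eq.~\eqref{smt}. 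Choosing $\ell>\chi$ (recall $\chi>2$ is fixed), we get that on $[0,T_\eps]$ the integral $\int_{|x_2-r_0|>\widehat r}\omega_\eps(x,t)\,\rmd x$ is strictly smaller than $\eps^\chi$ for all $\eps$ small enough.

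Now I would invoke continuity in $t$. The map $t\mapsto \int_{|x_2-r_0|>\widehat r}\omega_\eps(x,t)\,\rmd x$ is continuous (the solution $\omega_\eps$ of the Navier--Stokes equation depends continuously on time in, say, $L^1_{\mathrm{loc}}$ with the needed decay, and the spreading is controlled), so if $T_\eps<T$ then by definition of the supremum in Eq.~\eqref{ansatz_massa_vort} one would have $\int_{|x_2-r_0|>\widehat r}\omega_\eps(x,T_\eps)\,\rmd x = \eps^\chi$. But the estimate just derived shows this quantity is $\le o(\eps^\ell) < \eps^\chi$ at $t=T_\eps$ for $\eps$ small, a contradiction. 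Hence there is $\eps_0\in(0,1)$ such that $T_\eps=T$ for all $\eps\in(0,\eps_0)$. Once $T_\eps=T$, Lemma~\ref{lem:3} holds with $T_\eps$ replaced by $T$, and combining Eq.~\eqref{smt} with Eq.~\eqref{Br0} exactly as above (now valid on all of $[0,T]$) yields Eq.~\eqref{smt*}.

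The only genuinely delicate ingredient is the continuity of $t\mapsto \int_{|x_2-r_0|>\widehat r}\omega_\eps(x,t)\,\rmd x$, which is needed to close the bootstrap; this follows from the regularity of the Navier--Stokes solution together with the a priori bounds already available (the $L^\infty$ bound Eq.~\eqref{maxpri}, the mass bound Eq.~\eqref{cons-mass}, and finite moments), but it must be checked that the diffusive spreading does not create mass at infinity discontinuously in time — it does not, since heat-kernel tails decay superpolynomially and the integrability in $x_2^2$ is preserved. Everything else is elementary: matching the two strips via Eq.~\eqref{Br0}, and choosing $\ell>\chi$ to beat the threshold $\eps^\chi$ in the definition of $T_\eps$. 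I expect no real obstacle here beyond bookkeeping; the substantive work is entirely in Lemma~\ref{lem:3}, which is proved (sketched) in Appendix~\ref{sec:A}.
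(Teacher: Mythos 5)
Your proposal is correct and follows essentially the same route as the paper: combine Lemma \ref{lem:3} with the bound \eqref{Br0} to transfer the strip from $B_{\eps,2}^*(t)$ to $r_0$, pick $\ell$ large enough to beat the threshold $\eps^\chi$ in the definition \eqref{ansatz_massa_vort}, and close by continuity to get $T_\eps=T$, whence \eqref{smt*}. The continuity of $t\mapsto\int_{|x_2-r_0|>\widehat r}\omega_\eps(x,t)\,\rmd x$, which you rightly flag as the only delicate point, is simply taken for granted in the paper's one-line ``by continuity'' step.
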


\begin{proof}
From Eqs.~\eqref{Br0} and \eqref{smt} we deduce that for each $\ell>0$ and $k\in (0,\frac 14)$,
\begin{equation}
\label{smt1}
\lim_{\eps\to 0} \, \max_{t \in [0, T_\eps]} \eps^{-\ell} \int_{|x_2-r_0|>\frac{1}{|\log\eps|^k}}\!\rmd x\,\omega_\eps(x,t) = 0\,.
\end{equation}
Choosing, e.g., $\ell>2\chi$, where $\chi>2$ is the parameter appearing in Eq.~\eqref{ansatz_massa_vort}, it follows that there is $\eps_0\in (0,1)$ such that 
\[
\int_{|x_2-r_0|>\widehat r}\!\rmd x\,\omega_\eps(x,t) \le C \eps^\ell \le \eps^{2\chi} \quad \forall\,t \in [0,T_\eps] \quad \forall\, \eps\in (0,\eps_0).
\]
By continuity $T_\eps =T$ for any $\eps\in (0,\eps_0)$. Clearly, Eq.~\eqref{smt*} then follows from Eq.~\eqref{smt1}.
\end{proof}

\subsection{Analysis of the axial motion}
\label{sec:4.2}

The first step is a concentration result which shows that large part of the vorticity mass remains confined in a disk whose size is infinitesimal as $\eps\to 0$.

\begin{lemma}
\label{lem:4}
Assume the hypothesis of Theorem \ref{thm:2} and let $\eps_0$ as in Corollary \ref{cor:1}. For any $\eta \in (\gamma,1)$ there are $\eps_1\in (0,\eps_0)$, $\varrho_1>0$, $C_1>0$, and $q_\eps(t)\in \bb R^2$, such that
\begin{equation}
\label{lem1b}
|\log\eps| \int_{\Sigma(q_\eps(t),\varrho_\eps)}\!\rmd x\, \omega_\eps(x,t) \ge 1 - \frac{C_1}{|\log\eps|^{\eta-\gamma}} \qquad \forall\, t\in [0,T] \quad  \forall\, \eps\in (0,\eps_1)\,,
\end{equation}
with $\varrho_\eps = \varrho_1\eps \exp(|\log\eps|^\eta)$.
\end{lemma}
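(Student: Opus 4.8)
\textbf{Proof plan for Lemma \ref{lem:4}.}
The plan is to mimic the concentration argument of Ref.~\cite{BruMar}, adapted to the presence of the external field $F^\eps$ and using the strong localization of Corollary \ref{cor:1}. First, I would introduce a quantity measuring how much the vorticity mass fails to be concentrated, working with the ``relative'' center of vorticity restricted to the strip $|x_2-r_0|\le |\log\eps|^{-k}$ (on which, by Corollary \ref{cor:1}, all but a super-algebraically small fraction of the mass lives, and on which the kernels $L$ and $\mc R$ are bounded). The natural object is a potential-theoretic functional of the type
\[
V_\eps(t) := |\log\eps|^2\int\!\rmd x\!\int\!\rmd y\,\omega_\eps(x,t)\,\omega_\eps(y,t)\,\phi_\eps(x-y)\,,
\]
where $\phi_\eps$ is a suitably truncated/rescaled version of $-\mc G(x)=\frac{1}{2\pi}\log|x|$, designed so that $\phi_\eps(x-y)$ is small when $|x-y|\lesssim\varrho_\eps$ and bounded below otherwise; controlling $V_\eps(t)$ forces the bulk of the mass into a disk of radius $\varrho_\eps$. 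Equivalently, and perhaps cleaner, I would track the ``mass outside a shrinking disk centered at $B_\eps^*(t)$'' and derive a differential inequality for it.

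The key computational step is to differentiate this functional along the flow via the weak formulation Eq.~\eqref{weqF}. The self-interaction term involving $\widetilde u$ is the one responsible for concentration in the inviscid theory and is handled as in Ref.~\cite{BruMar}: the singular Biot–Savart kernel $K=\nabla^\perp\mc G$ pairs with $\nabla\phi_\eps$ to produce an antisymmetric integrand with a definite-sign (or controllably small) contribution, exploiting that $\omega_\eps/x_2$ is bounded by $\mc M_\eps\sim M\eps^{-2}|\log\eps|^{-1}$ via the maximum principle Eq.~\eqref{maxpri}. The remainder kernels $L$ and $\mc R$ contribute terms of order $|\log\eps|^{-1}$ on the strip (using Eqs.~\eqref{bc_bound}, \eqref{sR} and $x_2$ bounded below there), the tails outside the strip are absorbed by Corollary \ref{cor:1}, and the external field contributes $O(|\log\eps|^{-1})$ from $\widetilde F^\eps$ (Lipschitz, so $|\widetilde F^\eps\cdot\nabla\phi_\eps|$ is controlled), a negligible $O(\eps^\beta)$ from $\bar F^\eps$, and a term from $\widehat F^\eps$ that is weighted by the super-algebraically small vorticity mass in $\{|x_2-r_0|>\widehat r\}$ (here $T_\eps=T$ from Corollary \ref{cor:1} is essential). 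Finally the viscous term $\nu\omega_\eps(\Delta\phi_\eps - x_2^{-1}\partial_{x_2}\phi_\eps)$ is estimated using $\nu\le\eps^2|\log\eps|^\gamma$ together with the size of the second derivatives of $\phi_\eps$ at scale $\varrho_\eps=\varrho_1\eps\exp(|\log\eps|^\eta)$; this is precisely where the gap $\eta>\gamma$ enters, since $\nu\,\|D^2\phi_\eps\|\sim \eps^2|\log\eps|^\gamma\cdot(\varrho_\eps/\eps)^{-2}\cdot(\dots)$ must be made $o(|\log\eps|^{-(\eta-\gamma)})$, forcing $\exp(2|\log\eps|^\eta)$ to beat $|\log\eps|^{\gamma}$-type losses. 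Integrating the resulting differential inequality on $[0,T]$ from the well-concentrated initial datum Eq.~\eqref{initialF} yields $V_\eps(t)\le C|\log\eps|^{-(\eta-\gamma)}$, from which the mass bound Eq.~\eqref{lem1b} follows by a Chebyshev-type argument, with $q_\eps(t)$ taken to be (a point near) $B_\eps^*(t)$.

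The main obstacle, I expect, is the bookkeeping of the viscous contribution at the exponentially small length scale $\varrho_\eps$: one must choose the profile $\phi_\eps$ so that it is genuinely constant (say $\equiv\frac{1}{2\pi}\log\varrho_\eps$) on $|x-y|\le\varrho_\eps$ — killing the concentrated part of $D^2\phi_\eps$ — while keeping $\|D^2\phi_\eps\|_{L^\infty}$ only as large as $\varrho_\eps^{-2}$, and then verify that $\nu\mc M_\eps\|\omega_\eps\|_{L^1}\varrho_\eps^{-2}\lesssim \eps^2|\log\eps|^\gamma\cdot\eps^{-2}|\log\eps|^{-1}\cdot|\log\eps|^{-1}\cdot\varrho_1^{-2}\eps^{-2}\exp(-2|\log\eps|^\eta)$ is $o(|\log\eps|^{-(\eta-\gamma)})$ — which it is, since $\exp(-2|\log\eps|^\eta)$ dominates any power of $|\log\eps|$. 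A secondary difficulty is making the antisymmetrization of the self-interaction term rigorous when $\phi_\eps$ is only the truncated logarithm and $\omega_\eps$ is merely $L^1\cap L^\infty$; this is routine but must be done carefully, and the details are the part I would defer to the Appendix, as the authors signal they do for the neighboring lemmas.
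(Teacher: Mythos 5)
Your proposal assembles the right ingredients (the maximum principle \eqref{maxpri}, the localization of Corollary \ref{cor:1} to neutralize $\widehat F^\eps$, the Lipschitz bound on $\widetilde F^\eps$, the constraint $\nu\le\eps^2|\log\eps|^\gamma$), but the route you sketch is not the paper's and contains a genuine gap at its core. The paper does not differentiate a truncated-logarithm interaction functional along the flow at all. It reduces Lemma \ref{lem:4} to three \emph{a priori} bounds --- $M_0(t)\le C/|\log\eps|$, $M_2(t)\le C/|\log\eps|$, and the energy lower bound $E(t)\ge E(0)-C/|\log\eps|^{2-\gamma}$ --- and then invokes the \emph{static} concentration lemma of Ref.~\cite[Lemma 2.1]{BruMar}, which converts near-maximality of the kinetic energy (whose self-interaction part diverges logarithmically at scale $\eps$) into the mass bound \eqref{lem1b} at each fixed time. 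The dynamics only enters through $\dot M_2$ (where the self-interaction cancels exactly by \eqref{M2d=0} and only $F^\eps_2$ contributes) and through $\dot E=\dot E_\nu+\dot E_{F^\eps}$, where $\dot E_\nu=-2\pi\nu\int \rmd x\,\omega_\eps^2 x_2$ is the exact dissipation identity and $\dot E_{F^\eps}=2\pi\int \rmd x\,\omega_\eps F^\eps\cdot\nabla\Psi$ is handled via the stream function; in the transport term $u\cdot\nabla\Psi=0$ identically, so no singular self-interaction ever has to be estimated.

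The gap in your scheme is precisely that this exact cancellation is lost once you replace the Green function by a truncated profile $\phi_\eps$: the transport contribution to $\dot V_\eps$ is a triple integral $\int\!\int\!\int K(x-y')\cdot\nabla\phi_\eps(x-y)\,\omega_\eps(x)\omega_\eps(y)\omega_\eps(y')$, which is neither antisymmetric-vanishing nor sign-definite, and you give no mechanism to control it at the scale $\varrho_\eps=\varrho_1\eps\exp(|\log\eps|^\eta)$ with the required precision. Relatedly, you misplace the origin of the exponent $\eta-\gamma$: by your own computation the viscous term $\nu\|D^2\phi_\eps\|_\infty$ at scale $\varrho_\eps$ is beaten by $\exp(-2|\log\eps|^\eta)$ and hence would impose no constraint relating $\eta$ and $\gamma$. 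In the actual proof the deficit $|\log\eps|^{-(\eta-\gamma)}$ arises from comparing the total dissipated energy $\int_0^T|\dot E_\nu|\le C\nu/(\eps^2|\log\eps|^2)\le C|\log\eps|^{\gamma-2}$ (a bulk effect at scale $\eps$, via $\|\omega_\eps/x_2\|_{L^\infty}\le C\eps^{-2}|\log\eps|^{-1}$ and $M_2\le C/|\log\eps|$) against the self-energy cost $\sim|\log\eps|^{\eta-2}$ of moving mass out to distance $\varrho_\eps$; this is where $\eta>\gamma$ is used, and it is invisible in your bookkeeping. A differential inequality for the ``mass outside a shrinking disk'' (your alternative suggestion) is the technique of Lemma \ref{lem:3} and yields localization at polynomial-in-$|\log\eps|$ scales, not the exponentially small radius and quantitative deficit claimed in \eqref{lem1b}; the energy method is what delivers those.
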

For later purpose, we remark that, by Eqs.~\eqref{MgammaF} (with $a=1$) and \eqref{dotom<},  Eq.~\eqref{lem1b} implies
\begin{equation}
\label{omega-q-con}
\lim_{\eps\to 0} |\log\eps| \int\!\rmd x\, \omega_\eps(x,t) = 1 \quad \forall\, t\in [0,T]\,,
\end{equation}
i.e., the vorticity mass is almost conserved for $\eps$ small.

In absence of the external field, the content of Lemma \ref{lem:4} has been proved in Ref.~\cite{BruMar}, which extends a similar result for the Euler case treated in Ref.~\cite{BCM00}. The Euler case in presence of a regular external field (i.e., satisfying Eq.~\eqref{reg_F}) has been analyzed in Ref.~\cite[Lemma 4.1]{ButCavMar}. The proof of Lemma \ref{lem:4} is given in Appendix \ref{sec:B}.

\medskip
From Lemma \ref{lem:4}, Eq.~\eqref{massa} holds with $\zeta_\eps(t)=q_\eps(t)$ so that, recalling Eq.~\eqref{free_m} and that throughout this section we set $a=1$, the proof of Theorem \ref{thm:2} is completed if we show that
\begin{equation}
\label{qq}
\lim_{\eps\to 0} q_{\eps,2}(t) = r_0\,, \quad  \lim_{\eps\to 0} q_{\eps,1}(t) = z_0 + \frac{t}{4\pi r_0} \quad \forall\, t\in [0,T]\,.
\end{equation}

The first limit in Eq.~\eqref{qq} is an immediate consequence of Eqs.~\eqref{smt*} and \eqref{omega-q-con}. Indeed, by absurd, if there were a sequence $\eps_j \to 0$ such that $\varliminf_{j\to \infty} |q_{\eps_j,2}(t)-r_0| >0$ then the strip $S_j := \{x\colon |x_2-r_0|\le |\log\eps_j|^{-k} \}$ and the disk $\Sigma_j := \Sigma(q_{\eps_j}(t),\varrho_{\eps_j})$ would be disjoint for any large enough $j$, while the vorticity should concentrate in both of them, i.e.,
\[
\lim_{j \to \infty} |\log\eps_j| \int_{S_j}\!\rmd x\, \omega_{\eps_j}(x,t)
= \lim_{j \to \infty}  |\log\eps_j| \int_{\Sigma_j}\!\rmd x\, \omega_{\eps_j}(x,t) = 1\,.
\]

As explained below, the second limit in Eq.~\eqref{qq} can be deduced from the following proposition, whose proof is given at the end of the section.

\begin{proposition}
\label{prop:1}
Assume the hypothesis of Theorem \ref{thm:2}. Let $B_{\eps,1}(t)$ be the first component of the center of vorticity Eq.~\eqref{c.m.} and let $J_\eps(t)$ be the corresponding radial moment of inertia,
\begin{equation}
\label{moment1} 
J_\eps(t) = \int\! \rmd x\,  \omega_\eps(x,t) \left(x_1-B_{\eps,1}(t)\right)^2 \,.
\end{equation}
Then,
\begin{align}
\label{b1c}
\lim_{\eps\to 0} B_{\eps,1}(t) & = z_0 + \frac{t}{4\pi r_0} \quad \forall\, t\in [0,T]\,,
\\ \label{Jst} J_\eps(t) & \le \frac{C}{|\log\eps|} \quad \forall\, t\in [0,T]\,.
\end{align}
\end{proposition}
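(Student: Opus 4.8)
The plan is to estimate the time derivatives of $B_{\eps,1}(t)$ and $J_\eps(t)$ using the weak formulation Eq.~\eqref{weqF} with the test functions $f(x) = x_1$ and $f(x) = (x_1 - B_{\eps,1}(t))^2$ respectively, exactly mirroring the radial estimates in Lemma \ref{lem:2} but now with the stronger localization of Corollary \ref{cor:1} and Lemma \ref{lem:4} at our disposal (and with $T_\eps = T$). Note that since $f = x_1$ has no $x_2$-dependence, the viscous correction $-\frac{1}{x_2}\partial_{x_2}f$ in Eq.~\eqref{weqF} vanishes and the pure Laplacian term $\nu\Delta f$ vanishes too, so the viscosity does not enter the computation of $\dot B_{\eps,1}$ directly; the only subtlety is that $\int\!\rmd x\,\omega_\eps$ is no longer exactly $|\log\eps|^{-1}$, but Eq.~\eqref{omega-q-con} controls this. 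For the self-interaction, I would split $u$ according to Eq.~\eqref{decom_u}: the Biot–Savart part $\widetilde u$ satisfies the antisymmetry Eq.~\eqref{tu=}-type cancellation, the kernel $L$ contributes the main term $\frac{1}{4\pi x_2}\log\frac{1+|x-y|}{|x-y|}$ in the $x_1$-component, and $\mc R$ plus $F^\eps$ are lower order by Eqs.~\eqref{sR}, \eqref{sing_F}, \eqref{reg_F}, \eqref{picc_F}. The main term, using that the vorticity mass concentrates near $x_2 = r_0$ with total mass $\sim |\log\eps|^{-1}$ and is supported on a scale $\varrho_\eps = \varrho_1\eps\exp(|\log\eps|^\eta) \to 0$, produces $\dot B_{\eps,1}(t) \to \frac{1}{4\pi r_0}$; here one uses $\int\!\rmd y\,\omega_\eps(y,t)\int\!\rmd x\,\omega_\eps(x,t)\frac{1}{4\pi x_2}\log\frac{1+|x-y|}{|x-y|}$ and the fact that $\log\frac{1+|x-y|}{|x-y|} \approx |\log|x-y||$ for small $|x-y|$, together with the log-scale of $\varrho_\eps$, to extract the factor $|\log\eps|$ that cancels the $|\log\eps|^{-2}$ from mass squared times the $|\log\eps|$ prefactor in $B_\eps$.

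For the first limit Eq.~\eqref{b1c}, once $|\dot B_{\eps,1}(t) - \frac{1}{4\pi r_0}| \to 0$ uniformly on $[0,T]$ is established, integration in time together with $B_{\eps,1}(0) \to z_0$ (from Eq.~\eqref{initialF}) gives the claim. For the moment of inertia bound Eq.~\eqref{Jst}, I would compute $\dot J_\eps(t)$ from Eq.~\eqref{weqF} with $f = (x_1 - B_{\eps,1}(t))^2$, which generates a term $2\int\!\rmd x\,\omega_\eps(x_1 - B_{\eps,1})[(u_1 + F^\eps_1) - \dot B_{\eps,1}]$ plus the viscous term $2\nu\int\!\rmd x\,\omega_\eps$. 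Using the decomposition of $u_1$, the key cancellation is that the leading $L$-kernel part of $u_1$ is (to leading order) constant in $x$ over the concentration region and equals $\dot B_{\eps,1}$ up to lower-order corrections, so that $[(u_1 + F^\eps_1) - \dot B_{\eps,1}]$ is small on the bulk of the mass; on the tail (mass $\le \eps^\ell$ by Corollary \ref{cor:1}) one uses the crude bound $|u_1| \le \|u\|_{L^\infty} \le C$ from Eq.~\eqref{u_piccolo} together with $\|\omega_\eps\|_{L^1} \le |\log\eps|^{-1}$, $\|x_2^2\omega_\eps\|_{L^1} \le C|\log\eps|^{-1}$ (from conservation of $M_2$ up to external-field corrections), and the maximum principle Eq.~\eqref{maxpri}. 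This yields a differential inequality of the form $|\dot J_\eps(t)| \le \frac{C}{|\log\eps|^{3/2}}\sqrt{J_\eps(t)} + \frac{C}{|\log\eps|^2}$ (the $\nu$-term is $\le C\eps^2|\log\eps|^{\gamma-1}$, negligible), and since $J_\eps(0) \le 4\eps^2$, the same Gronwall-type bootstrap used at the end of the proof of Lemma \ref{lem:2} gives $J_\eps(t) \le C|\log\eps|^{-1}$; note the power here is $|\log\eps|^{-1}$ rather than $|\log\eps|^{-2}$ because the axial self-interaction, unlike the radial one, does not enjoy the same degree of cancellation — the logarithmic kernel $\log\frac{1+|x-y|}{|x-y|}$ varies on the support on a scale producing an extra $|\log\eps|$ loss.

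Finally, to pass from Proposition \ref{prop:1} to the second limit in Eq.~\eqref{qq}, I would argue as follows: by Eq.~\eqref{lem1b} and Eq.~\eqref{omega-q-con}, for each $t$ the bulk of the mass lies in $\Sigma(q_\eps(t),\varrho_\eps)$ with $\varrho_\eps \to 0$; hence $B_{\eps,1}(t) = |\log\eps|\int\!\rmd x\,\omega_\eps x_1$ differs from $q_{\eps,1}(t)$ by at most $\varrho_\eps|\log\eps|\int_{\Sigma}\omega_\eps + |\log\eps|\int_{\Sigma^c}\omega_\eps |x_1|$, where the first term is $o(1)$ since $\varrho_\eps|\log\eps| \to 0$ (because $\eps\exp(|\log\eps|^\eta)|\log\eps| \to 0$ for $\eta < 1$) and the second is controlled by combining the tail mass bound with the a priori bound $J_\eps(t) \le C|\log\eps|^{-1}$ via Chebyshev to show the mass with $|x_1 - B_{\eps,1}| $ large is negligible and carries negligible first moment. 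Therefore $q_{\eps,1}(t)$ and $B_{\eps,1}(t)$ have the same limit, namely $z_0 + t/(4\pi r_0)$ by Eq.~\eqref{b1c}.

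The main obstacle I anticipate is the extraction of the precise constant $\frac{1}{4\pi r_0}$ in $\dot B_{\eps,1}$: one must show that the leading contribution of the $L$-kernel term, $|\log\eps|\int\!\rmd x\,\omega_\eps(x,t)\int\!\rmd y\,\frac{\omega_\eps(y,t)}{4\pi x_2}\log\frac{1+|x-y|}{|x-y|}$, converges to $\frac{1}{4\pi r_0}$. This requires simultaneously (a) replacing $x_2$ by $r_0$ in the denominator, which is justified by the radial concentration Eq.~\eqref{smt*} plus the maximum-principle bound on $\omega_\eps$, (b) showing that $\int\!\rmd x\,\omega_\eps(x,t)\int\!\rmd y\,\omega_\eps(y,t)\log\frac{1+|x-y|}{|x-y|} = \frac{1}{|\log\eps|^2}(1 + o(1)) \cdot \frac{?}{}$ — more precisely that, after multiplying by $|\log\eps|$, the double integral behaves like the mass squared times the effective logarithmic scale $|\log\varrho_\eps| \sim |\log\eps|$, giving the clean limit $\frac{1}{4\pi r_0}\cdot(|\log\eps|\|\omega_\eps\|_1)^2/|\log\eps| \to \frac{1}{4\pi r_0}$; the delicate point is controlling the region $|x - y| \gtrsim 1$ where $\log\frac{1+|x-y|}{|x-y|}$ is small (hence harmless) versus $|x-y| \lesssim \varrho_\eps$ where it is large but the mass is $\sim |\log\eps|^{-1}$, and ruling out an intermediate-scale contribution — this is where the precise form $\varrho_\eps = \varrho_1\eps\exp(|\log\eps|^\eta)$ and the bound $\|\omega_\eps\|_{L^\infty} \le M\eps^{-2}|\log\eps|^{-1}$ conspire to give exactly a logarithmic factor asymptotic to $|\log\eps|$, and it essentially reproduces the single-vortex computation of Ref.~\cite{BruMar} with the external-field corrections shown to be negligible by Eqs.~\eqref{sing_F}, \eqref{reg_F}, \eqref{picc_F} and Corollary \ref{cor:1}.
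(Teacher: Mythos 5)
Your treatment of Eq.~\eqref{b1c} follows the paper's proof in all essentials: weak formulation with $f=x_1$, the decomposition Eq.~\eqref{decom_u} with the $\widetilde u$-contribution killed by antisymmetry, the $\mc R$- and $F^\eps$-contributions shown to be $O(|\log\eps|^{-1})$, and the $L$-kernel term evaluated by rearrangement (using the maximum principle bound on $\omega_\eps$, the concentration of Lemma~\ref{lem:4}, and the specific scale $\varrho_\eps=\varrho_1\eps\exp(|\log\eps|^\eta)$) to produce exactly $1/(4\pi r_0)$ in the limit. You correctly identify the delicate point and the mechanism that resolves it; this matches the paper's computation of $Q_\eps^{1,1}$ and $Q_\eps^{1,2}$.

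There is, however, a genuine gap in your argument for Eq.~\eqref{Jst}. You claim the differential inequality $|\dot J_\eps|\le C|\log\eps|^{-3/2}\sqrt{J_\eps}+C|\log\eps|^{-2}$, justified by asserting that the $L$-kernel part of $u_1$ is ``to leading order constant over the concentration region and equals $\dot B_{\eps,1}$ up to lower-order corrections.'' This near-constancy at order $o(|\log\eps|^{-1/2})$ in $L^2(\omega_\eps\,\rmd x)$ is precisely the hard statement, and it is not established (nor needed): the quantity $\int_{|y_2-x_2|\le 2\widehat r}L_1(x,y)\omega_\eps(y,t)\,\rmd y$ varies by $O(1)$ as $x$ ranges over the support, as one sees from the rearrangement bound Eq.~\eqref{Q12}, so no such cancellation is available at the order you invoke. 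Moreover your conclusion is internally inconsistent: the differential inequality you wrote, together with $J_\eps(0)\le 4\eps^2$ and the bootstrap, would close at $J_\eps\le C|\log\eps|^{-2}$, not at the $C|\log\eps|^{-1}$ you then assert by appealing to an ``extra $|\log\eps|$ loss'' that contradicts the inequality itself. The paper's route requires no cancellation at all: on the strip $|x_2-r_0|\le\widehat r$ one uses the crude pointwise bound $|u_1+F^\eps_1-\widetilde u_1|\le C$ (from Eqs.~\eqref{RF}, \eqref{Q2}, and \eqref{Q12}), and off the strip the bound $C/(\eps|\log\eps|)$ together with the superpolynomially small tail mass of Corollary~\ref{cor:1}; Cauchy--Schwarz then gives $|\dot J_\eps|\le C|\log\eps|^{-1/2}\sqrt{J_\eps}+C|\log\eps|^{-2}$, which is exactly what closes the bootstrap at $J_\eps\le C|\log\eps|^{-1}$. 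You should replace your cancellation claim with this elementary estimate; the rest of your outline (including the passage from $B_{\eps,1}$ to $q_{\eps,1}$ via Chebyshev and $J_\eps$) is sound and agrees with the paper.
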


Indeed, we claim that Eq.~\eqref{Jst} implies
\begin{equation}
\label{q1b1}
\lim_{\eps\to 0} |B_{\eps,1}(t) - q_{\eps,1}(t)| = 0  \quad \forall\, t\in [0,T]\,,
\end{equation}
which, together with Eq.~\eqref{b1c}, gives the second limit in Eq.~\eqref{qq}. To prove Eq.~\eqref{q1b1}, we first notice that
\[
\begin{split}
B_{\eps,1}(t) - q_{\eps,1}(t) & = \left(1- \frac{1}{|\log\eps|\int\!\rmd x\, \omega_\eps(x,t)} \right) B_{\eps,1}(t) \\ & \quad + \frac{1}{|\log\eps|\int\!\rmd x\, \omega_\eps(x,t)} |\log\eps| \int\! \rmd x\, \omega_\eps(x,t) [x_1 - q_{\eps,1}(t)]\,.
\end{split}
\]
By Eq.~\eqref{b1c}, $|B_{\eps,1}(t)|$ is bounded uniformly for any $\eps$ small and $t\in [0,T]$, therefore Eq.~\eqref{omega-q-con} implies it is sufficient to show that
\begin{equation}
\label{q1b1bis}
\lim_{\eps\to 0}\mc P_\eps(t) = 0  \quad \forall\, t\in [0,T]\,, \qquad \text{where }\;\; \mc P_\eps(t) := |\log\eps| \int\! \rmd x\, \omega_\eps(x,t) |x_1 - q_{\eps,1}(t)|\,.
\end{equation}
To this end, we set $\Sigma_t = \Sigma(q_\eps(t), \varrho_\eps)$ and estimate,
\[
\begin{split}
\mc P_\eps(t)  & = |\log\eps|\int_{\Sigma_t}\! \rmd x\, |x_1 - q_{\eps,1}(t)| \omega_\eps(x,t) +  |\log\eps|\int_{\Sigma_t^\complement}\! \rmd x\,  |x_1 - q_{\eps,1}(t)| \omega_\eps(x,t) \\ & \le \varrho_\eps + \frac{C_1}{|\log\eps|^{\eta-\gamma}} |B_{\eps,1}(t) - q_{\eps,1}(t)| +  |\log\eps|\int_{\Sigma_t^\complement}\! \rmd x\, |x_1-B_{\eps,1}(t)| \omega_\eps(x,t)\,,
\end{split}
\]
where we used Eq.~\eqref{lem1b}. Therefore, by assuming $\eps$ so small to have $2C_1 \le |\log\eps|^{\eta-\gamma}$,
\[
\begin{split}
\mc P_\eps(t)  & \le 2 \varrho_\eps + 2   \sqrt{|\log\eps|\int_{\Sigma_t^\complement}\! \rmd x\, \omega_\eps(x,t)}  \sqrt{|\log\eps|\int_{\Sigma_t^\complement}\! \rmd x\, \left(x_1-B_{\eps,1}(t)\right)^2 \omega_\eps(x,t)} \\ & \le 2 \varrho_\eps + 2  \sqrt{\frac{C_1}{|\log\eps|^{\eta-\gamma}}} \sqrt{ |\log\eps| J_\eps(t)}\,,
\end{split}
\]
where we applied again Eq.~\eqref{lem1b}. This estimate and Eq.~\eqref{Jst} clearly imply Eq.~\eqref{q1b1bis}. 

As a final remark, we notice that (thanks to the control on the moment of inertia) we have identified the points $q_\eps(t)$ and $B_\eps(t)$ in the limit $\eps\to 0$, while in Ref.~\cite{BCM00,BruMar} only the properties Eq.~\eqref{qq} were stated. 

\begin{proof}[Proof of Proposition \ref{prop:1}] 
We start with the proof of Eq.~\eqref{b1c}.  By Eq.~\eqref{weqF} with $f(x,t) =x_1$ we have,
\begin{align}
\label{bpunto}
\dot B_{\eps,1}(t) & = |\log\eps| \int\!\rmd x\, \omega_\eps(x,t) \, (u_1+F^\eps_1)(x,t) \nonumber \\ & = |\log\eps| \int_{|x_2 - r_0| > \widehat r} \!\rmd x\, \omega_\eps(x,t) \, \left(u_1+F^\eps_1 - \widetilde u_1\right)(x,t) \nonumber \\ & \quad + |\log\eps|\int_{|x_2 - r_0| \le \widehat r} \!\rmd x\, \omega_\eps(x,t) \left( \int\!\rmd y\, \mc R_1(x,y) \omega_\eps(y,t) + F^\eps_1 (x,t)\right) \nonumber \\ & \quad + |\log\eps|\int_{|x_2 - r_0| \le \widehat r} \!\rmd x\, \omega_\eps(x,t) \int\!\rmd y\, L_1(x,y) \omega_\eps(y,t) \,,
\end{align}
where we used Eq.~\eqref{decom_u} and (the first component of) \eqref{tu=}. From Eqs.~\eqref{sing_F}, \eqref{reg_F}, \eqref{picc_F}, and \eqref{uinf<},
\[
\left||\log\eps| \int_{|x_2 - r_0| > \widehat r} \!\rmd x\, \omega_\eps(x,t) \, \left(u_1 + F^\eps_1 \right)(x,t)\right| \le \frac{C}{\eps|\log\eps|} \int_{|x_2 - r_0| > \widehat r} \!\rmd x\, \omega_\eps(x,t)\,,
\]
and the right-hand side vanishes as $\eps\to 0$ by Eq.~\eqref{smt*}. Concerning the term containing $\widetilde u_1$, we observe that
\[
|\widetilde u_1 (x,t)| \le  \int_{|y_2 - x_2| \le \widehat r} \! \rmd y\, \frac{\omega_\eps(y,t)}{2\pi|x-y|} + \int_{|y_2 - x_2| > \widehat r} \! \rmd y\, \frac{\omega_\eps(y,t)}{2\pi|x-y|}\,.
\]
An upper bound of the first integral in the right-hand side can be obtained by rearrangement. More precisely, from the maximum principle Eq.~\eqref{maxpri} and the assumptions Eqs.~\eqref{MgammaF} and \eqref{initialF},
\[
\omega(y,t) \le \frac{x_2 + \widehat r}{r_0-\eps} \frac{M}{\eps^2|\log\eps|} \quad \forall\,y \colon |y_2-x_2| \le \widehat r\,.
\]
Therefore, by repeating the same reasoning done for the analysis of the term $T_{1,2}$ in Lemma \ref{lem:2}, in this case we obtain
\[
\int_{|y_2 - x_2| \le \widehat r} \! \rmd y\, \frac{\omega_\eps(y,t)}{2\pi|x-y|} \le \frac{C\sqrt{1+x_2}}{\eps |\log\eps|}\,.
\]
On the other hand, using again Eq.~\eqref{cons-mass},
\[
\int_{|y_2 - x_2| > \widehat r} \! \rmd y\, \frac{\omega_\eps(y,t)}{2\pi|x-y|} \le \frac1{2\pi \widehat r|\log\eps|}\,. 
\]
Therefore,
\begin{equation}
\label{utilde1}
|\widetilde u_1 (x,t)|  \le \frac{C\sqrt{1+x_2}}{\eps |\log\eps|}\,,
\end{equation}
whence, as $1+x_2 \le 2+x_2^2$, from the Cauchy-Schwarz inequality and Eqs.~\eqref{M2}, \eqref{M2<*}, and \eqref{cons-mass},
\[
\left||\log\eps| \int_{|x_2 - r_0| > \widehat r} \!\rmd x\, \omega_\eps(x,t) \, \widetilde u_1(x,t)\right|  \le \frac{C}{\eps\sqrt{|\log\eps|}} \sqrt{\int_{|x_2 - r_0| > \widehat r} \!\rmd x\, \omega_\eps(x,t)}\,,
\]
which vanishes as $\eps \to 0$ by Eq.~\eqref{smt*}. Next, from Eqs.~\eqref{sing_Fc}, \eqref{reg_F}, \eqref{picc_F}, \eqref{cons-mass}, and \eqref{sR},
\begin{align}
\label{RF}
& \left| |\log\eps|\int_{|x_2 - r_0| \le \widehat r} \!\rmd x\, \omega_\eps(x,t) \left( \int\!\rmd y\,  \mc R_1(x,y) \omega_\eps(y,t) + F^\eps_1 (x,t)\right)\right| \nonumber \\ & \qquad \le C \left(\int\!\rmd y\, \sqrt{y_2} (1+|\log y_2|) \omega_\eps(y,t) + \frac{C}{|\log\eps|} \right) \le \frac{C}{|\log\eps|}\,,
\end{align}
where in the last inequality we also used that $\sqrt{y_2} (1+|\log y_2|) \le C(1+y_2^2)$ and again Eqs.~\eqref{M2}, \eqref{M2<*}, and \eqref{cons-mass}.

From the above estimates and Eq.~\eqref{bc_bound} we have thus shown that
\begin{equation}
\label{b12}
\lim_{\eps \to 0} |\dot B_{\eps,1}(t) - Q_\eps(t)|  = 0 \quad \forall\, t\in [0, T] \,,
\end{equation}
where
\[
Q_\eps(t) := |\log\eps|\int_{|x_2 - r_0| \le \widehat r}\!\rmd x\, \omega_\eps(x,t) \frac{1}{4\pi x_2} \int\!\rmd y\, \log\frac {1+|x-y|}{|x-y|} \omega_\eps(y,t) \,.
\]
To compute the limit of $Q_\eps(t)$ as $\eps\to 0$, we first write
\[
Q_\eps(t) = Q_\eps^1(t) + Q_\eps^2(t)\,,
\]
where
\[
\begin{split}
Q_\eps^1(t) & =  |\log\eps|\int_{|x_2 - r_0| \le \widehat r}\!\rmd x\, \omega_\eps(x,t) \frac{1}{4\pi x_2} \int_{|y_2 - x_2| \le  2 \widehat r}\!\rmd y\, \log\frac {1+|x-y|}{|x-y|} \omega_\eps(y,t)\,,  \\ Q_\eps^2(t) & =  |\log\eps|\int_{|x_2 - r_0| \le \widehat r}\!\rmd x\, \omega_\eps(x,t) \frac{1}{4\pi x_2} \int_{|y_2 - x_2| > 2 \widehat r}\!\rmd y\, \log\frac {1+|x-y|}{|x-y|} \omega_\eps(y,t)\,.
\end{split}
\]
We observe that the function $\rho \mapsto \log\frac {1+\rho|}{\rho}$ is decreasing and  diverging as $\rho\to 0^+$. In particular,
\begin{align}
\label{Q2}
Q_\eps^2(t) & \le  \frac{ |\log\eps| }{4\pi(r_0-\widehat r)} \log\frac{1+ 2\widehat r}{2\widehat r} \int_{|x_2 - r_0| \le \widehat r}\!\rmd x\, \omega_\eps(x,t) \int_{|y_2 - x_2| > 2 \widehat r}\!\rmd y\, \omega_\eps(y,t) \nonumber \\ & \le  \frac{1}{2\pi(r_0-\widehat r)} \log\frac{1+ 2\widehat r}{2\widehat r} \int_{|y_2 - r_0| > \widehat r} \!\rmd y\, \omega_\eps(y,t)\,,
\end{align}
where in the last inequality we used that 
\[
\{(x,y) \colon |x_2 - r_0| \le \widehat r \,, |y_2 - x_2| > 2 \widehat r \} \subset \{(x,y) \colon |y_2 - r_0| > \widehat r\}\,.
\]
Therefore, by Eq.~\eqref{smt*}, $Q_\eps^2(t) \to 0$ as $\eps\to 0$, so that it remains to compute the limit of $Q_\eps^1(t)$ as $\eps\to 0$. To this purpose, again using the abbreviated notation  $\Sigma_t = \Sigma(q_\eps(t), \varrho_\eps)$, we decompose 
\[
Q_\eps^1(t) = Q_\eps^{1,1}(t) + Q_\eps^{1,2}(t)\,,
\]
where
\[
Q_\eps^{1,1}(t) = |\log\eps| \int_{\Sigma_t}\!\rmd x\, \omega_\eps(x,t)  \frac{1}{4\pi x_2} \int_{\Sigma_t}\!\rmd y\, \log\frac {1+|x-y|}{|x-y|} \omega_\eps(y,t)\,.
\]
By the first limit in Eq.~\eqref{qq}, the disk $\Sigma(q_\eps(t), \varrho_\eps)$ is internal to the strip $\{x\colon |x_2-r_0|\le \widehat r\}$ for any $\eps$ small enough, therefore the rest $Q_\eps^{1,2}(t) = Q_\eps^1(t) - Q_\eps^{1,1}(t)$ is the sum of three terms, each one is the integration of the same function\footnote{$\id_A$ denotes the indicator function of the set $A$.} 
\[
\mc F(x,y) := \frac{1}{4\pi x_2} \log\frac {1+|x-y|}{|x-y|} \omega_\eps(x,t)\omega_\eps(y,t) \id_{\{(x,y) \colon |x_2 - r_0| \le \widehat r\,, \;\; |y_2 - x_2| \le  2 \widehat r\}} \,,
\]
and in each domain of integration at least one between the $x$ and the $y$ variable is contained in $\Sigma(q_\eps(t),  \varrho_\eps)^\complement$. Moreover, using also Eq.~\eqref{rh<r_0/4},
\[
\frac{1}{y_2} \le \frac{5}{x_2} \le \frac{3r_0+4}{3r_0}\frac{5}{x_2+1} \quad \forall\, (x,y) \colon |x_2 - r_0| \le \widehat r\,, \;\; |y_2 - x_2| \le  2 \widehat r\,.
\]
Therefore,
\[
Q_\eps^{1,2}(t) \le C|\log\eps| \int_{\Sigma_t^\complement}\!\rmd x\,\frac{\omega_\eps(x,t)}{x_2+1}  \int_{|y_2 - x_2| \le  2 \widehat r}\!\rmd y\, \log\frac {1+|x-y|}{|x-y|} \omega_\eps(y,t)\,.
\]
The maximum of the $\rmd y$-integral in the right-hand side is achieved when we rearrange the mass as close as possible to the singularity. As by Eq.~\eqref{maxpri}, \eqref{MgammaF}, and \eqref{initialF},
\[
\omega_\eps(y,t) \le \frac{C(x_2+1)}{\eps^2|\log\eps|}  \quad \forall\, y \colon |y_2 - x_2| \le  2 \widehat r\,,
\]
if $\bar\rho$ is such that $C(x_2+1) \bar\rho^2 /(\eps^2|\log\eps|)= 1/|\log\eps|$ then
\begin{align}
\label{Q12}
& \int_{|y_2 - x_2| \le  2 \widehat r}\!\rmd y\, \log\frac {1+|x-y|}{|x-y|}\, \omega_\eps(y,t) \le \frac{C(x_2+1)}{\eps^2|\log\eps|} \int_0^{\bar\rho}\!\rmd \rho\, \rho \, \log\frac{1+\rho}{\rho} \nonumber \\ & \qquad =  \frac{C(x_2+1)}{\eps^2|\log\eps|} \bigg\{\frac{\bar\rho^2}{2} \log\frac{1+\bar\rho}{\bar\rho} + \frac 12 \int_0^{\bar\rho}\!\rmd \rho\, \frac{\rho}{1+\rho} \bigg\} \le C\log(x_2+1)\,.
\end{align}
Therefore,
\[
Q_\eps^{1,2}(t) \le C |\log\eps| \int_{\Sigma_t^\complement}\!\rmd x\, \omega_\eps(x,t)\,,
\]
which implies $Q_\eps^{1,2}(t) \to 0 $ as $\eps\to 0$ by Eq.~\eqref{lem1b}.

Concerning $Q_\eps^{1,1}(t)$, we obtain a lower bound by inserting a lower bound to the function $\frac{1}{4\pi x_2}\log\frac {1+|x-y|}{|x-y|}$ in the domain of integration and applying again Eq.~\eqref{lem1b}, 
\begin{align}
\label{q2}
Q_\eps^{1,1}(t) & \ge \frac{|\log\eps|}{4\pi (q_{\eps,2}(t)+\varrho_\eps)} \log\frac {1+2\varrho_\eps}{2\varrho_\eps} \bigg(\int_{\Sigma_t}\!\rmd x\, \omega_\eps(x,t)\bigg)^2 \nonumber \\ & \ge  \frac{|\log\eps|}{4\pi (q_{\eps,2}(t)+\varrho_\eps)} \log\frac {1+2\varrho_\eps}{2\varrho_\eps} \frac{1}{|\log \eps|^2}\bigg(1-\frac{C_1}{|\log\eps|^{\eta-\gamma}} \bigg)^2 \nonumber \\  & = \frac{1}{4\pi (q_{\eps,2}(t)+\varrho_\eps)} \big[1 + o_-(\eps)\big]\,,
\end{align}
where in the last equality we used that $\varrho_\eps = \varrho_1\eps \exp(|\log\eps|^\eta)$ and $o_-(\eps)\to 0$ as $\eps\to 0$. On the other hand, again by rearrangement and noticing that Eq.~\eqref{maxpri}, \eqref{MgammaF}, and \eqref{initialF} now imply there is $M_1>0$ such that, for any $\eps$ small enough,
\[
\omega_\eps(y,t) \le \frac{M_1}{\eps^2|\log\eps|} \quad \forall\, y\in \Sigma_t\,,
\]
if $\bar\rho$ is such that $\pi\bar\rho^2 M_1/(\eps^2|\log\eps|) = 1/|\log\eps|$ we have
\begin{align}
\label{q3}
Q_\eps^{1,1}(t) & \le \frac{1}{4\pi (q_{\eps,2}(t)-\varrho_\eps)} \sup_x \int_{\Sigma_t}\!\rmd y\, \log\frac {1+|x-y|}{|x-y|} \omega_\eps(y,t) \nonumber \\ & \le  \frac{M_1}{2\eps^2 |\log\eps|(q_{\eps,2}(t)-\varrho_\eps)}  \bigg\{\frac{\bar\rho^2}{2} \log\frac{1+\bar\rho}{\bar\rho} + \frac 12 \int_0^{\bar\rho}\!\rmd \rho\, \frac{\rho}{1+\rho} \bigg\} \nonumber \\ & = \frac{1}{4\pi (q_{\eps,2}(t)-\varrho_\eps)} \big[1 + o_+(\eps)\big]\,, 
\end{align}
with $o_+(\eps)\to 0$ as $\eps\to 0$. By the first limit in Eq.~\eqref{qq}, we conclude that the right-hand side in both Eqs.~\eqref{q2} and \eqref{q3} converges to $1/(4\pi r_0)$ as $\eps\to 0$, so that Eq.~\eqref{b1c} follows from Eq.~\eqref{b12} and \eqref{initialF}.

\smallskip
We are left with the proof of Eq.~\eqref{Jst}. We compute the time derivative of $J_\eps(t)$, by using Eq.~\eqref{weqF},
\[
\begin{split}
\dot J_\eps(t) & = 2\int \!\rmd x\, \omega_\eps(x,t) (u_1(x,t)+F_1^\eps(x,t) - \dot B_{\eps,1}(t)) (x_1-B_{\eps,1}(t)) \\ & \quad + 2 \nu  \int \!\rmd x\, \omega_\eps(x,t) \\ & =  2 \int \!\rmd x\, \omega_\eps(x,t) (u_1+F_1^\eps)(x,t)(x_1-B_{\eps,1}(t)) + 2 \nu  \int \!\rmd x\, \omega_\eps(x,t)  \\ & = 2 \int \!\rmd x\, \omega_\eps(x,t) (u_1+F_1^\eps - \widetilde u_1)(x,t) (x_1-B_{\eps,1}(t)) \\ & \quad +  2 \int \!\rmd x\, \omega_\eps(x,t) \widetilde u_2(x,t) x_2 + 2 \nu  \int \!\rmd x\, \omega_\eps(x,t)\,,
\end{split}
\]
where in the second equality we used that $\int \rmd x\, \omega_\eps(x,t)\, (x_1-B_{\eps,1}(t)) = 0$ (by definition of $B_{\eps,1}(t)$), and the third equality follows from Eq.~\eqref{tu=} and the identity
\[
\int\! \rmd x \, \omega_\eps(x,t) \, x \cdot \widetilde{u}(x,t) = 0\,.
\]
The latter comes from the fact that, as $(x-y) \cdot K(x-y) =0$, 
\[
\begin{split} 
\int\! \rmd x \, \omega_\eps(x,t) \, x \cdot \widetilde{u}(x,t) = & \int\! \rmd x \int\! \rmd y \, \omega_\eps(x,t)\,\omega_\eps(y, t) \, x \cdot K(x-y) \\ = & \int\! \rmd x \int\! \rmd y \, \omega_\eps(x,t)\,\omega_\eps(y, t) \, y \cdot K(x-y)\,,
\end{split}
\]
whence this integral is zero by the anti-symmetry of $K$.

Recalling Eq.~\eqref{decom_u} and using Eq.~\eqref{M2d=0} the time derivative of $J_\eps(t)$ reads,
\begin{equation}
\label{jdot}
\dot J_\eps(t) = \mc J_\eps^1(t) + \mc J_\eps^2(t) + \mc J_\eps^3(t) + 2 \nu  \int \!\rmd x\, \omega_\eps(x,t)\,,
\end{equation}
where
\[
\begin{split}
\mc J_\eps^1(t) & = 2 \int_{|x_2-r_0| > \widehat r} \!\rmd x\, \omega_\eps(x,t) (u_1+F_1^\eps - \widetilde u_1)(x,t) (x_1-B_{\eps,1}(t))\,, \\ \mc J_\eps^2(t) & = 2 \int_{|x_2-r_0| \le \widehat r} \!\rmd x\, \omega_\eps(x,t)(u_1+F_1^\eps - \widetilde u_1)(x,t)  (x_1-B_{\eps,1}(t))\,,  \\ \mc J_\eps^3(t) & = -  2 \int \!\rmd x\, \omega_\eps(x,t) x_2 \int\! \rmd y\, \omega_\eps(y,t) \, \mc R_2(x,y)\,.
\end{split}
\]

By Eq.~\eqref{cons-mass} and recalling $\nu\le \eps^2|\log\eps|^\gamma$, the last term in the right-hand side of Eq.~\eqref{jdot} is very small,
\[
2 \nu  \int \!\rmd x\, \omega_\eps(x,t)  \le 2 \eps^2|\log\eps|^{\gamma-1}\,.
\]
Next, as already observed just after Eq.~\eqref{bpunto}, from Eqs.~\eqref{sing_F}, \eqref{reg_F}, \eqref{picc_F}, and \eqref{uinf<}, we have $|u_1|+|F_1^\eps| \le C/(\eps|\log\eps|)$, while $|\widetilde u_1|$ is bounded in Eq.~\eqref{utilde1}. Therefore, by applying the Cauchy-Schwarz inequality twice and then Eqs.~\eqref{M2}, \eqref{M2<*}, and \eqref{cons-mass},
\[
\begin{split}
|\mc J_\eps^1(t) | & \le  \frac{C\sqrt{J_\eps(t)}}{\eps |\log\eps|} \sqrt{\int_{|x_2 - r_0| > \widehat r} \!\rmd x\, \omega_\eps(x,t) (1+x_2)} \\ & \le \frac{C\sqrt{J_\eps(t)}}{\eps |\log\eps|^{5/4}} \left[\int_{|x_2 - r_0| > \widehat r} \!\rmd x\, \omega_\eps(x,t) \right]^{1/4} \le  C_\ell \eps^\ell \sqrt{J_\eps(t)} \;\; \forall\, t\in [0,T] \;\;\forall\, \ell>0\,,
\end{split}
\]
where Eq.~\eqref{smt*} has been applied in the last inequality. On the other hand, again by the Cauchy-Schwarz inequality,
\[
|\mc J_\eps^2(t)| \le  \sqrt{J_\eps(t)} \, \sqrt{\int_{|x_2-r_0| \le \widehat r} \!\rmd x\, \omega_\eps(x,t) (u_1+F_1^\eps - \widetilde u_1)^2(x,t)}\,,
\]
where, by Eq.~\eqref{decom_u},
\[
\begin{split}
& |(u_1+F_1^\eps - \widetilde u_1)(x,t)| \le \left|\int\!\rmd y\,  \mc R_1(x,y) \omega_\eps(y,t) + F^\eps_1 (x,t)\right| \\ & \quad + \int_{|y_2-x_2|> 2\widehat r}\!\rmd y\,  L_1(x,y) \omega_\eps(y,t) + \int_{|y_2-x_2|\le 2\widehat r}\!\rmd y\,  L_1(x,y) \omega_\eps(y,t)\,.
\end{split}
\]
In Eq.~\eqref{RF} we have shown that for $|x_2-r_0| \le \widehat r$ the first term in the right-hand side is bounded by $C/|\log\eps|$. Similarly, in Eq.~\eqref{Q2} we obtained an upper bound for the second term when $|x_2-r_0| \le \widehat r$. More precisely,
\[
\begin{split}
\int_{|y_2-x_2|> 2\widehat r}\!\rmd y\,  L_1(x,y) \omega_\eps(y,t) & \le \frac{1}{2\pi(r_0-\widehat r)} \log\frac{1+ 2\widehat r}{2\widehat r} \int_{|y_2 - r_0| > \widehat r} \!\rmd y\, \omega_\eps(y,t) \\ & \le  C_\ell \eps^\ell \quad \forall\, t\in [0,T] \quad\forall\, \ell>0\,,
\end{split}
\]
where Eq.~\eqref{smt*} has been applied in the last inequality. Finally, from Eq.~\eqref{Q12} we deduce that if $|x_2-r_0| \le \widehat r$ then
\[
\int_{|y_2-x_2|\le 2\widehat r}\!\rmd y\,  L_1(x,y) \omega_\eps(y,t) \le C\,.
\]
From the above estimates and Eq.~\eqref{cons-mass} we conclude that
\[
|\mc J_\eps^2(t)| \le \frac{C}{|\log\eps|^{1/2}} \sqrt{J_\eps(t)} \quad \forall\, t\in [0,T]\,.
\]
Finally, by Eqs.~\eqref{cons-mass} and \eqref{sR},
\[
|\mc J_\eps^3(t)| \le \frac{C}{|\log\eps|^2}\,.
\]

Collecting all together we thus proved that
\[
|\dot J_\eps(t)| \le \frac{C}{|\log\eps|^{1/2}} \sqrt{J_\eps(t)} + \frac{C}{|\log\eps|^2} \quad \forall\, t\in [0,T]\,.
\]
Since the initial data imply $J_\eps(0) \le 4\eps^2$, we deduce Eq.~\eqref{Jst} from the above inequality by arguing as done in the proof of Eq.~\eqref{Iee}. More precisely, if
\[
T_M = \sup\left\{ t\in [0,T]\colon J_\eps(s) \le \frac{M}{|\log\eps|} \;\;  \forall\, s\in [0,t] \right\}
\]
then $T_M>0$ for any $\eps$ small enough and
\[
|\dot J_\eps(t)| \le \frac{C\sqrt M}{|\log\eps|} + \frac{C}{|\log\eps|^2} \quad \forall\, t\in [0,T_M]\,.
\]
This implicates, as $T_M \le T$,
\[
J_\eps(t) \le 4\eps^2 + \frac{CT\sqrt M}{|\log\eps|} + \frac{CT}{|\log\eps|^2} \quad \forall\, t\in [0,T_M]\,.
\]
If $M$ is chosen large enough then, for any $\eps$ small enough,
\[
J_\eps(t) \le \frac{M}{2|\log\eps|}  \quad \forall\, t\in [0,T_M]\,,
\]
which implies $T_M=T$ by continuity, whence Eq.~\eqref{Jst} (with $C=M$).
\end{proof}

\section{Proof of Theorem \ref{thm:1}}
\label{sec:5}

As already explained at the end of Section \ref{sec:2}, the motion of each vortex $\omega_{i,\eps}(x,t)$ can be viewed as the evolution of a single vortex ring driven by the sum of the velocity generated by $\omega_{i,\eps}(x,t)$ itself plus the external time-depending field given by Eq.~\eqref{Fconj}.

Recalling Eqs.~\eqref{initial} and \eqref{2D} we fix $\chi_0 \gg 1$ and define
\begin{equation}
\label{ansatz_massa_vort_i}
T^0_\eps := \bigcap_{k=1}^N\sup\left\{t\in [0,T] \colon \int_{|x_2-r_k|> D/2} \!\rmd x \, \omega_{k,\eps}(x,t) \le \eps^{\chi_0} \;\; \forall\, s\in [0,t] \right\},
\end{equation}
so that $T^0_\eps>0$ for any $\eps$ small enough by continuity. For each $i=1,\dots, N$, we decompose the field Eq.~\eqref{Fconj} as
\[
F^{\eps,i}(x,t) = G^{\eps,i}(x,t) + \bar F^{\eps,i}(x,t)\,,
\]
with
\begin{align*}
G^{\eps,i}(x,t) & = \ \sum_{j\ne i} \int_{|y_2-r_j|\le D/2}\!\rmd y\, H(x,y)\, \omega_{j,\eps}(y,t)\,, \\ \bar F^{\eps,i}(x,t) & = \sum_{j\ne i} \int_{|y_2-r_j| > D/2}\!\rmd y\, H(x,y)\, \omega_{j,\eps}(y,t)\,.
\end{align*}
Since $G^{\eps,i}(x,t)$ is generated by vorticities supported in disjoint strips, we can apply the same reasoning of Ref.~\cite[Lemma 2.3]{ButCavMar} to conclude that there exists a field $\widetilde F^{\eps,i}(x,t)$ such that 

\noindent
(a) $\partial_{x_1}(x_2 \widetilde F^{i,\eps}_1) + \partial_{x_2}(x_2 \widetilde F^{i,\eps}_2) = 0$;

\noindent
(b) for a suitable $\widetilde  C>0$, Eq.~\eqref{reg_F} holds with $\widetilde F^\eps(x,t) = \widetilde F^{i,\eps}(x,t)$;

\noindent
(c) $\widetilde F^{i,\eps}(x,t) = G^{i,\eps}(x,t)$ for any $(x,t)$ such that $|x_2-r_i|\le D/2$ and $t\in [0,T]$.

\smallskip
We then decompose $F^{\eps,i}(x,t)$ as in Eq.~\eqref{ext_field} with
\[
\widehat F^\eps(x,t) = G^{\eps,i}(x,t) - \widetilde F^{\eps,i}(x,t)\,, \quad \widetilde F^\eps(x,t) = \widetilde F^{\eps,i}(x,t) \quad \bar F^\eps(x,t) = \bar F^{\eps,i}(x,t)\,. 
\]
We claim that for any $t\in [0,T^0_\eps]$ the above functions satisfy the assumptions Eqs.~\eqref{divfree}, \eqref{sing_Fc}, \eqref{sing_F}, \eqref{rh<r_0/4}, \eqref{reg_F}, and \eqref{picc_F}, with $r_0=r_i$. Indeed, by item (a) above, the condition Eq.~\eqref{divfree} is clearly satisfied. Moreover:

(i) Eq.~\eqref{reg_F} holds for any $t\in [0,T]$ by the above item (b).

(ii) The support property Eq.~\eqref{sing_Fc} clearly holds with $\widehat r = D/2$, which also satisfies Eq.~\eqref{rh<r_0/4} thanks to Eq.~\eqref{2D}. Moreover, in view of Eqs.~\eqref{cons-mass}, \eqref{maxpri} and the conservation of  $M_2 =\|x_2^2 \omega_\eps\|_{L^1}$, Eq.~\eqref{u_piccolo} and the initial conditions Eqs.~\eqref{MgammaF} and \eqref{initialF} imply that $G^{\eps,i}(x,t)$ (and therefore also $\widehat F^\eps(x,t) $) satisfies Eq.~\eqref{sing_F} for any $t\in [0,T]$ and a suitable constant $\widehat C>0$.

(iii) Finally, from the same reasoning done to estimate $G^{\eps,i}(x,t)$ and using the definition Eq.~\eqref{ansatz_massa_vort_i} we deduce that $|\bar F^{\eps,i}(x,t)| \le C \eps^{\chi_0/4}/(\eps|\log\eps|^{3/4})$ for any $t\in [0,T^0_\eps]$, i.e., Eq.~\eqref{picc_F} holds with $\beta>1$ (provided that, e.g., $\chi_0>8$).

At this point, the analysis of the previous sections can be applied to each vortex $i=1,\ldots, N$, choosing $T_\eps = T^0_\eps$ in Eq.~\eqref{ansatz_massa_vort}. In particular, Corollary \ref{cor:1} implies $T^0_\eps = T$ for any $\eps$ small enough, and Theorem \ref{thm:1} follows from Theorem \ref{thm:2}.
\qed

\appendix

\section{Proof of Lemma \ref{lem:3}}
\label{sec:A}

Given $R,h$ such that $\widehat r/2 \ge R+h \ge R \ge 2h^\alpha$ ($R,h$ will be eventually chosen vanishing as $\eps\to 0$), with $\alpha$ a positive parameter to be fixed later, let $W_{R,h}(x_2)$, with $x_2$ the second component of $x = (x_1, x_2)$, be a non-negative smooth function, such that
\begin{equation}
\label{W1}
W_{R,h}(x_2) = \begin{cases} 1 & \text{if $|x_2|\le R$}, \\ 0 & \text{if $|x_2|\ge R+h$}, \end{cases}
\end{equation}
and its first and second derivative satisfy
\begin{equation}
\label{W2}
| W_{R,h}'(x_2)| < \frac{C}{h}\,,
\end{equation}
\begin{equation}
\label{W3}
| W_{R,h}''(x_2)| < \frac{C}{h^2}\,.
\end{equation}
The quantity
\begin{equation}
\label{mass 1}
\mu_t(R,h) = \int\! \rmd x \, \big[1-W_{R,h}(x_2-B_{\eps,2}^*(t))\big]\, \omega_\eps (x,t)
\end{equation}
is a mollified version of $m_t$ satisfying
\begin{equation}
\label{2mass 3}
\mu_t(R,h) \le m_t(R) \le \mu_t(R-h,h)\,,
\end{equation}
hence it is sufficient to prove Eq.~\eqref{smt} with $\mu_t$ in place  of $m_t$.

We observe that from Eq.~\eqref{Br0} it follows that if $\eps$ is small enough then 
\begin{equation}
\label{suppW}
W_{R,h}(x_2-B_{\eps,2}^*(t)) = 0 \quad \forall\, x_2 \notin [r_0-\widehat r, r_0+\widehat r]  \quad \forall\, t\in [0,T_\eps]\,,
\end{equation}
and throughout the rest of the proof we shall assume $\eps$ such that Eq.~\eqref{suppW} holds. In particular, we can apply Eq.~\eqref{weqF} with test function $f(x,t) = W_{R,h}(x_2-B_{\eps,2}^*(t))$ to compute the time derivative of $\mu_t(R,h)$. Using also Eqs.~\eqref{dotom<}, \eqref{decom_u}, and \eqref{growth B}, we obtain,
\begin{align}
\label{mu_t}
\frac{\rmd}{\rmd t} \mu_t(R,h) & \le - \int\! \rmd x\, \omega_\eps(x,t)  \left(u_2(x,t)+F^\eps_2(x,t)-\dot{B}_{\eps,2}^*(t)\right) W_{R,h}'(x_2-B_{\eps,2}^*(t))  \nonumber \\ & \quad - \nu \int\! \rmd x\,  \omega_\eps(x,t) \left[ W_{R,h}''(x_2-B_{\eps,2}^*(t)) - \frac{1}{x_2}  W_{R,h}'(x_2-B_{\eps,2}^*(t))  \right] \nonumber \\ & \le  - \mc H_1 -  \mc H_2 - \mc H_3\,,
\end{align}
with
\begin{equation*}
\begin{split}
\mc H_1 & = \int\! \rmd x\,   W_{R,h}'(x_2-B_{\eps,2}^*(t))  \int\!\rmd y \, K_2(x-y)\, \omega_\eps(y,t)\, \omega_\eps(x,t) \\ & = \frac 12 \int\! \rmd x \! \int\! \rmd y\, \omega_\eps(x,t)\,  \omega_\eps(y,t) \\ & \quad \times \left[ W_{R,h}'(x_2-B_{\eps,2}^*(t)) -   W_{R,h}'(y_2-B_{\eps,2}^*(t))\right]   K_2(x-y) \,, \\ \mc H_2 & = \int\! \rmd x\, W_{R,h}'(x_2-B_{\eps,2}^*(t))\, \omega_\eps(x,t) V(x,t)\,, \\ \mc
H_3 & =   \nu \int\! \rmd x \left[ W_{R,h}''(x_2-B_{\eps,2}^*(t))  -
\frac{1}{x_2}  W_{R,h}'(x_2-B_{\eps,2}^*(t))  \right] \omega_\eps(x,t)
\end{split}
\end{equation*}
where we used the antisymmetry of $K$ to get the second expression of $\mc H_1$, and 
\begin{equation}
\label{j}
V(x,t) = F^\eps_2(x,t)  +\int\!\rmd z\, \mc R_2(x,z) \omega_\eps(z,t) - \dot B_{\eps,2}^*(t)\,.
\end{equation}

The term $\mc H_1$ is equal to the term $H_3$ appearing in Ref.~\cite[Eq.~(3.39)]{ButCavMar}, except for $B_{\eps,2}^*(t)$ in place of $B_{\eps,2}(t)$ (defined in Eq.~\eqref{c.m.}). It can be estimated exactly as done for $H_3$ in Ref.~\cite{ButCavMar}, getting
\[
|\mc H_1| \le \frac{C}{h^{1+\alpha} |\log\eps|} m_t(R) + \frac{C \widetilde I_\eps(t)}{ h^2 R^2}m_t(R) \quad \forall\, t\in [0,T_\eps]\,, 
\]
where
\[
\widetilde I_\eps(t) = \int\! \rmd x\,  \omega_\eps(x,t) \left(x_2-B_{\eps,2}^*(t)\right)^2.
\]
From definitions Eqs.~\eqref{moment*} and \eqref{G},
\[
|\widetilde I_\eps(t) - I_\eps^*(t)| \le C \eps^\chi \qquad \forall\, t\in [0,T_\eps]\,,
\]
so that, in view of Eq.~\eqref{Iee},
\begin{equation}
\label{a1s}
|\mc H_1| \le  C \left( \frac{1}{h^{1+\alpha}|\log\eps|} + \frac{1}{h^2 R^2 |\log\eps|^2} \right) m_t(R) \quad \forall\, t\in [0,T_\eps]\,.
\end{equation}

Concerning $\mc H_2$, we note that Eqs.~\eqref{sing_Fc}, \eqref{reg_F}, \eqref{picc_F}, \eqref{cons-mass}, \eqref{sR}, and \eqref{boundB} imply
\begin{equation}
\label{distance4}
|V(x,t)| \leq \frac{C}{|\log\eps|} \quad \forall\, x_2 \in [r_0-\widehat r, r_0+\widehat r]  \quad \forall\, t\in [0,T_\eps]\,.
\end{equation}
On the other hand, in view of Eqs.~\eqref{W1} and \eqref{suppW},  $W_{R,h}'(x_2-B_{\eps,2}^*(t))$ is zero if $|x_2-B_{\eps,2}(t)| \ge R$ as well as if $x_2 \in [r_0-\widehat r, r_0+\widehat r]$ and $t\in [0,T_\eps]$. Therefore, from Eqs.~\eqref{W2} and \eqref{distance4},
\begin{equation}
\label{acca4}
|\mc H_2| \le \frac{C}{h |\log\eps|} m_t(R) \quad \forall\, t\in [0,T_\eps]\,.
\end{equation}

Similarly, $W_{R,h}''(x_2-B_{\eps,2}^*(t))$ is zero if $|x_2-B_{\eps,2}(t)| \ge R$ as well as if $x_2 \in [r_0-\widehat r, r_0+\widehat r]$ and $t\in [0,T_\eps]$, so that, from Eq.~\eqref{W3},
\begin{equation}
\label{H_5}
|\mc H_3| \le C  \nu \left( \frac{1}{h}+\frac{1}{h^2} \right) m_t(R) \quad \forall\, t\in [0,T_\eps]\,.  
\end{equation}

\noindent From Eqs.~\eqref{a1s}, \eqref{acca4} and \eqref{H_5}, recalling Eq.~\eqref{mu_t} and that $\nu \leq \eps^2 |\log\eps|^\gamma$,  we conclude that
\begin{equation}
\label{equ_mm}
\frac{\rmd}{\rmd t} \mu_t (R,h) \leq A_\eps(R, h) m_t(R) \quad \forall\, t\in [0,T_\eps]\,,
\end{equation}
where
\[
A_\eps(R, h) = C \left(\frac{1}{h^{1+\alpha} |\log\eps|} + \frac{1}{h^2 R^2 |\log\eps|^2} +\frac{1}{h |\log\eps|}+ \frac{\eps^2 |\log\eps|^\gamma}{h^2}\right).
\]
Therefore, by Eqs.~\eqref{2mass 3} and \eqref{equ_mm},
\begin{equation}
\label{iter}
\mu_t (R,h) \le \mu_0 (R,h) + A_\eps(R, h) \int_0^t\! \rmd s\, \mu_s (R-h, h)\,.
\end{equation}

The proof can now be concluded arguing as in Ref.~\cite{ButCavMar}. Nonetheless, to keep the proof sufficiently self-contained, we prefer to report the details.

We assume $\eps$ sufficiently small and iterate Eq.~\eqref{iter} $n=\lfloor|\log\eps|\rfloor$  times (denoting with $\lfloor a\rfloor$ the integer part of $a>0$), from
\begin{equation}
\label{range_R}
R_0 =\frac{1}{|\log\eps|^{k}}  \quad\text{to} \quad R_n = \frac{1}{2|\log\eps|^{k}}\,,
\end{equation}
where
\[
R_n = R_0 -n h\,, \quad h = \frac{1}{2 n |\log\eps|^{k}} \,.
\]
Recalling the condition $R\ge 2 h^\alpha$, stated at the beginning of the proof and under which Eq.~\eqref{equ_mm} has been deduced, we make the choice
\begin{equation}
\label{alpha_delta}
\alpha=\frac{1-k}{1+k}-\delta \, , \qquad \delta\in \left(0, \frac{1-2k}{1+k}\right)\,.
\end{equation}
This implies
\[
h^\alpha \approx C \left(  \frac{1}{|\log\eps|^{1+k}} \right)^\alpha = C \left(  \frac{1}{|\log\eps|^{1+k}} \right)^{\frac{1-k}{1+k}-\delta} = \frac{C}{|\log\eps|^{1-k-(1+k)\delta}}\,,
\]
with $k<1-k-(1+k)\delta$ (by the choice of $\delta$ in Eq.~\eqref{alpha_delta}), hence $h^\alpha \ll R$ if $\eps$ is small enough and $R$ is in the range established by Eq.~\eqref{range_R}. Moreover, for $\eps$ small,
\[
\begin{split}
\frac{1}{h^{1+\alpha} |\log\eps|} & \le C \frac{\left( |\log\eps|^{k+1}   \right)^{1+\alpha}}{|\log\eps|} \le C |\log\eps |^{1-\delta(k+1)} \,, \\ 
\frac{\eps^2|\log\eps|^\gamma}{h^2} \leq \frac{1}{h^2 R^2 |\log\eps|^2} & \le C \frac{|\log\eps|^{4k+2} }{|\log\eps|^2} \leq |\log\eps|^{4k}\,, \\ \frac{1}{h |\log\eps|} & \le C |\log\eps|^{k} \,,
\end{split}
\]
so that there is $q \in (0,1)$ such that $A_\eps(R,h) \le C |\log\eps |^q$. In conclusion, for any $\eps$ small enough,
\[
\begin{split}
\mu_t(R_0-h,h) & \le \mu_0(R_0-h,h) + \sum_{j=1}^{n-1} \mu_0(R_j,h) \frac{(C |\log\eps|^q t)^j}{j!} \\ & \quad + \frac{(C |\log\eps|^q )^{n}}{(n-1)!} \int_0^t\!{\textnormal{d}} s\,  (t-s)^{n-1}\mu_s(R_{n},h) \,.
\end{split}
\]
Since $\Lambda_\eps(0) \subset \Sigma(z|\eps)$, if $\eps$ is sufficiently small then $\mu_0(R_j,h)=0$ for any $j=0,\ldots,n$, hence, 
\begin{equation}
\label{mass 15'}
\mu_t(R_0-h,h) \le \frac{(C |\log\eps|^q)^{n}}{(n-1)!} \int_0^t\! \rmd s\,  (t-s)^{n-1}\mu_s(R_{n},h) \le  \frac{(C |\log\eps|^q  t)^{n}}{n!}\,,
\end{equation}
where in the last inequality we have used the trivial bound $\mu_s(R_{n},h) \le 1$. Therefore, using Eq.~\eqref{2mass 3},  Stirling formula, and $n=\lfloor|\log\eps|\rfloor$,
\[
m_t(R_0) \le \mu_t(R_0 -h,h) \le  \frac{C}{|\log\eps|^{(1-q)|\log\eps|}} \,,
\]
which implies Eq.~\eqref{smt}.
\qed

\section{Proof of Lemma \ref{lem:4}}
\label{sec:B}

In absence of external field, the proof of Eq.~\eqref{lem1b} given in Ref.~\cite{BruMar} is obtained by applying the abstract concentration result Ref.~\cite[Lemma 2.1]{BruMar} to the inequality Ref.~\cite[Eq.~(2.9)]{BruMar}, involving an integral functional of the vorticity.\footnote{A notation warning: in Ref.~\cite{BruMar}, $\eps \to \sigma$ and $\nu \le \eps^2|\log\eps|^\gamma \to \nu \le \sigma^2 |\log\sigma|^\alpha$.} This inequality is deduced from an upper bound on the kinetic energy functional $E =  \frac 12 \int\!\rmd\bs\xi\, |\bs u (\bs\xi,t)|^2$, which in cylindrical coordinates $x=(x_1,x_2) = (z,r)$ takes the form
\[
E(t) =  \frac 12 \int\! \rmd x \,  2\pi x_2 |u(x,t)|^2\,,
\]
combined with the bounds
\begin{align}
\label{M0<}
M_0(t)  & = \int\! \rmd x\, \omega_\eps(x,t) \le \frac{C}{|\log\eps|}\,, \\ \label{M2<} M_2(t)  & = \int\! \rmd x \, x_2^2 \omega_\eps(x,t) \le \frac{C}{|\log\eps|}\,, \\ \label{E>} E(t) & \ge  E(0) - \frac{C\nu}{\eps^2|\log\eps|^2} \ge E(0) - \frac{C}{|\log\eps|^{2-\gamma}}\,,
\end{align}
with $\gamma \in (0,1)$ (for $t\in [0,T]$ and any $\eps$ small enough).

In Ref.~\cite{BruMar}, the upper bounds on $M_0$ and $M_2$ are easily deduced since $M_0(t) \le M_0(0)$, $M_2(t) = M_2(0)$ (in absence of external field, recall Eq.~\eqref{M2d=0}), and both $M_0(0)$ and $M_2(0)$ are bounded from above by $C/|\log\eps|$ in view of the assumption Eq.~\eqref{initialF} on the initial distribution. Concerning the lower bound on the energy (which is a conserved quantity in the Euler case in absence of external field), from the well known formula for the energy dissipation due to viscosity we have,
\begin{align}
\label{dotnu}
\dot E(t) & = -\nu \int_{\mathbb{R}^3}\rmd\bs\xi\,\sum_{i,j=1}^3 \left(\partial_{\xi_j} u_i(\bs\xi,t)\right)^2 = -\nu \int_{\mathbb{R}^3}\rmd\bs\xi\, |\bs \omega(\bs\xi,t)|^2 \nonumber \\
& =  -2\pi \nu \int\! \rmd x\, \omega_\eps^2(x,t) x_2  \ge -2\pi\nu M_2(t) \left\| \frac{\omega_\eps(t)}{x_2} \right\|_{L_\infty} \ge - \frac{C\nu}{\eps^2|\log\eps|^2}\,,
\end{align}
where we used Eqs. \eqref{M2<}, \eqref{maxpri}, and \eqref{initialF}. Therefore,
\begin{equation}
\label{viscosity_term}
-C\frac{ |\log\eps|^\gamma}{|\log\eps|^2}\le-\frac{C\nu}{\eps^2|\log\eps|^2}\le \dot E \le 0\,,
\end{equation}
by the assumption $\nu \le \eps^2 |\log\eps|^\gamma$, where $\gamma<1$. 

From what discussed above, we conclude that Lemma \ref{lem:4} is proved if we show that  Eqs.~\eqref{M0<}, \eqref{M2<}, and \eqref{E>} hold true also in the present case.  

The function $M_0(t)$ is non-increasing also in our case, and indeed we have already noticed the bound Eq.~\eqref{M0<}, see Eq.~\eqref{cons-mass}.

Concerning the variation of $M_2(t)$, we can apply Eq.~\eqref{weqF} with $f(x,t) = x_2^2$, so that, by Eq.~\eqref{M2d=0} (the contribution of the viscosity is zero),
\[
\dot M_2(t) = \int\! \rmd x \, \omega_\eps(x,t) \,2 x_2  F^\eps_2(x,t)\,.
\]
Therefore, from the Cauchy-Schwarz inequality and Eqs.~\eqref{ext_field}, \eqref{sing_Fc}, \eqref{sing_F}, \eqref{reg_F}, \eqref{picc_F}, and \eqref{cons-mass}, 
\[
\begin{split}
|\dot M_2(t)| & \le \sqrt{M_2(t)}\,  \sqrt{\int\!\rmd x\, \omega_\eps(x,t) F^\eps_2(x,t)^2} \\ & \le  \sqrt{M_2(t)} \,  \sqrt{\frac{C}{|\log\eps|^3} + \frac{\widehat C^2}{\eps^2|\log\eps|^2} \int_{|x_2-r_0| > \widehat r} \!\rmd x\, \omega_\eps(x,t)}\,,
\end{split}
\]
hence, by Eq.~\eqref{smt*},
\[
|\dot M_2(t)| \le C |\log\eps|^{-3/2} \sqrt{M_2(t)} \quad \forall\, t\in [0,T]\,,
\]
which implies $M_2(t) \le M_2(0) + C |\log\eps|^{-2}$ for any $t\in [0,T]$, so that Eq.~\eqref{M2<} follows, i.e.,
\begin{equation}
\label{M2<*} 
M_2(t) \le \frac{C}{|\log\eps|} \quad \forall\, t\in [0,T]\,.
\end{equation}

Finally, concerning the energy $E(t)$, its time derivative is
\[
\dot E(t) = \dot E_\nu(t) + \dot E_{F^\eps}(t)\,,
\]
where $\dot E_\nu(t)$ is the variation due to the viscosity, which has been already treated in Eq.~\eqref{dotnu}. To compute the variation $\dot E_{F^\eps}(t)$ due to the external field, we adapt the strategy developed in Ref.~\cite[Appendix A]{ButCavMar}, where the Euler case with regular external field $F^\eps=\widetilde F^\eps$ is considered. We write $u(x,t) = x_2^{-1} \nabla^\perp\Psi(x,t)$, where $\Psi(x,t)$ is the stream function,
\[
\Psi(x,t) = \int\!\rmd y\, S(x,y)\, \omega_\eps(y,t)\,,
\]
with $S(x,y)$ the Green function,
\begin{equation}
\label{S}
S(x,y) := \frac{x_2y_2}{2\pi} \int_0^\pi\!\rmd\theta\, \frac{\cos\theta}{\sqrt{|x-y|^2 + 2x_2y_2(1-\cos\theta)}}\,.
\end{equation}
Therefore, the energy reads (see, e.g., Refs.~\cite{BCM00,F}),
\[
E = \pi \int\! \rmd x\, \Psi(x,t) \, \omega_\eps(x,t) = \pi \int\!\rmd x \int\!\rmd y\, S(x,y)\, \omega_\eps(x,t) \, \omega_\eps(y,t)\,,
\]
and, by Eq.~\eqref{weqF}, the contribution to $\dot E(t)$ due to the external field is\footnote{Observe that $u\cdot \nabla \Psi=0$.}
\[
\dot E_{F^\eps}(t) = \pi \int\!\rmd x\, \omega_\eps(x,t) \, (F^\eps \cdot \nabla \Psi + \partial_t \Psi)(x,t)\,.
\]
We now recall that in Ref.~\cite[Appendix A]{ButCavMar} the following identity for $\dot E_{F^\eps}(t)$ holds true,
\[
\dot E_{F^\eps}(t)  = 2 \pi \int\!\rmd x\, \omega_\eps(x,t) (F^\eps \cdot \nabla \Psi)(x,t) \, .
\]
According to the decomposition Eq.~\eqref{ext_field} of $F^\eps$, we then write $\dot E_{F^\eps}(t) = \dot E_{\widehat F^\eps}(t) + \dot E_{\widetilde F^\eps}(t) + \dot E_{\bar F^\eps}(t)$ and analyze separately these terms.

As already noticed, $\widetilde F^\eps$ has the same properties of the external field considered in Ref.~\cite{ButCavMar}, so that the analysis of Ref.~\cite[Appendix A]{ButCavMar} applies to $\dot E_{\widetilde F^\eps}(t)$, obtaining
\[
\big|\dot E_{\widetilde F^\eps}(t)\big| \le \frac{C}{|\log\eps|^2}\,.
\]

To analyze the remaining terms, we shall use the following estimate on $u(x,t)$,
\begin{equation}
\label{uinf<}
\|u(\cdot, t)\|_{L^\infty} \le \frac{C}{\eps |\log\eps|} \quad \forall\, t\in [0,T]\,,
\end{equation}
which follows form Eqs.~\eqref{u_piccolo}, using Eq.~ \eqref{maxpri} together with Eqs.~\eqref{MgammaF} and \eqref{initialF}, Eq.~\eqref{cons-mass}, and the estimate $M_2 = \| x_2^2 \,\omega_\eps \|_{L^1} \le C/|\log\eps|$ proved above. Therefore, as $|\nabla \Psi(x,t)|=|\nabla^\perp \Psi(x,t)| = x_2 |u(x,t)|$, by Eqs.~\eqref{sing_Fc}, \eqref{sing_F}, and the Cauchy-Schwarz inequality,
\[
\big|\dot E_{\widehat F^\eps}(t)\big| \le \frac{C\sqrt{M_2(t)}}{\eps^2|\log\eps|^2} \sqrt{\int_{|x_2-r_0|>\widehat r} \! \rmd x\, \omega_\eps(x,t)} \le C_\ell \eps^\ell \quad \forall\, t\in [0,T] \quad\forall\, \ell>0\,,
\]
where we used Eqs.~\eqref{smt*} and \eqref{M2<*} in the last inequality. Similarly, by Eq.~\eqref{picc_F},
\[
\big|\dot E_{\bar F^\eps}(t)\big| \le \frac{C\eps^{\beta-1}}{|\log\eps|} \int\!\rmd x\, \omega_\eps(x,t) x_2 =  \frac{C\eps^{\beta-1}}{|\log\eps|^2} B_{\eps,2}(t) \le \frac{C\eps^{\beta-1}}{|\log\eps|^2}\,,
\]
where we used that $B_{\eps,2}^*(t) \le C$ and $|B_{\eps,2}(t)-B_{\eps,2}^*(t)|$ is vanishing for $\eps\to 0$.

In conclusion, 
\[
\big|\dot E_{F^\eps}(t)\big| \le \frac{C}{|\log\eps|^2} \ll \frac{C}{|\log\eps|^{2-\gamma}}
\]
and also Eq.~\eqref{E>} holds in the present case. The lemma is thus proved.
\qed

\end{document}